\documentclass[a4paper,10pt]{article}
% Load package
\usepackage{amssymb,amsmath,amsthm,amsfonts}
\usepackage{bookmark}
\usepackage{mathrsfs}
\usepackage{multirow}
\usepackage{graphicx}
\usepackage{authblk}
\usepackage{indentfirst}
\usepackage{multicol}
\usepackage{tabu}
\usepackage{url}
\usepackage{fancyhdr}
\usepackage[numbers]{natbib}
\usepackage[all]{xy}
\usepackage{mathtools}
\usepackage[english]{babel}
\usepackage{colortbl}
\usepackage{caption}
\usepackage{hyperref}\usepackage{subfigure}
\usepackage{tikz}\usepackage{float}

\newtheorem{theorem}{Theorem}[section]
\newtheorem{proposition}[theorem]{Proposition}
\newtheorem{lemma}[theorem]{Lemma}
\newtheorem{corollary}[theorem]{Corollary}

\newtheorem{definition}{Definition}[section]
\newtheorem{example}{Example}[section]
\newtheorem{remark}{Remark}[section]
\newcommand{\im}{{\mathrm{im}\hspace{0.1em}}}

% %%%%%%%%%%%%%%%%%%%%%%%%%%%% cite from SI %%%%%%%%%%%%%%%%%%%%%%%%%%%%
\usepackage{xr}
\makeatletter
    \newcommand*{\addFileDependency}[1]{
    \typeout{(#1)}
    \@addtofilelist{#1}
    \IfFileExists{#1}{}{\typeout{No file #1.}}
    }
\makeatother

% % put all the external documents here!
% \myexternaldocument{SI_v1}

%%%%%%%%%%%%%%%%%%%%%%%%%%%% Basic settings %%%%%%%%%%%%%%%%%%%%%%%%%%%%
\captionsetup{font=footnotesize}
\topmargin=-0.45in
\evensidemargin=0in
\oddsidemargin=0in
\textwidth=6.5in
\textheight=9.0in
\headsep=0.25in
\linespread{1.1}
\cfoot{\thepage}
\cfoot{abc}

%%%%%%%%%%%%%%%%%%%%%%%%%%%% Title %%%%%%%%%%%%%%%%%%%%%%%%%%%%
\setlength{\parindent}{2em}
%\title{Persistent interaction (co)homology}
\title{Interaction homotopy and interaction homology}

%%%%%%%%%%%%%%%%%%%%%%%%%%%% Authors %%%%%%%%%%%%%%%%%%%%%%%%%%%%
\author[1,2]{Jian Liu\thanks{Corresponding author:  liujian2@mail.nankai.edu.cn}}
\author[2]{Dong Chen}
\author[2,4,5]{Guo-Wei Wei }
\affil[1]{Mathematical Science Research Center, Chongqing University of Technology, Chongqing 400054, China}
\affil[2]{Department of Mathematics, Michigan State University, MI, 48824, USA}
%\affil[3]{Yanqi Lake Beijing Institute of Mathematical Sciences and Applications, Beijing 101408, China}
\affil[4]{Department of Electrical and Computer Engineering, Michigan State University, MI 48824, USA}
\affil[5]{Department of Biochemistry and Molecular Biology, Michigan State University, MI 48824, USA}

%%%%%%%%%%%%%%%%%%%%%%%%% Author affil %%%%%%%%%%%%%%%%%%%%%%%%%%

\makeatletter
    \renewcommand*{\@fnsymbol}[1]{\ensuremath{\ifcase#1\or \dagger\or *\or *\or
   \mathsection\or \else\@ctrerr\fi}}
\makeatother
\date{}

%%%%%%%%%%%%%%%%%%%%%%%%%%%% Document %%%%%%%%%%%%%%%%%%%%%%%%%%%%
\begin{document}
    % \linenumbers
    \maketitle

    \paragraph{Abstract}
  Interactions in complex systems are widely observed across various fields, drawing increased attention from researchers. In mathematics, efforts are made to develop various theories and methods for studying the interactions between spaces. In this work, we present an algebraic topology framework to explore interactions between spaces. We introduce the concept of interaction spaces and investigate their homotopy, singular homology, and simplicial homology. Furthermore, we demonstrate that interaction singular homology serves as an invariant under interaction homotopy. We believe that the proposed framework holds potential for practical applications.

    \paragraph{Keywords}
     Interaction homotopy, Interaction homology, Homotopy class, Wu characteristic, Homotopy invariant.
\footnotetext[1]
{ {\bf 2020 Mathematics Subject Classification.}  	Primary  55N20; Secondary 37F05, 55P10.
}
%   \newpage
%    \tableofcontents
%  \newpage

\section{Introduction}\label{section:introduction}

Interactions between spaces play crucial roles in various fields, including biology \cite{barabasi2004network}, physics \cite{boccaletti2006complex}, and computer science \cite{newman2003structure}. Complex networks are commonly used models to study these interactions and have been extensively explored. Efforts are ongoing to build a topological framework that can reveal the essence of these interactions. Typically, interactions among spaces involve their intersections. Several theories, rooted in these intersection relationships, have been developed to characterize the topological structures of complex spaces. The nerve complex was introduced to record the intersection patterns among sets in a family \cite{alexandroff1928allgemeinen,eilenberg2015foundations}. \v{C}ech cohomology is a classical theory to analyze and compute cohomology groups with sheaf coefficients by considering the intersections of open coverings of a topological space \cite{bott1982differential, wells1980differential}.  Intersection homology theory was developed to study stratified spaces by considering allowed cycles with well-behaved intersection properties \cite{goresky1980intersection, kirwan1984cohomology}. Intersection homology has applications in algebraic geometry, differential geometry, and mathematical physics.

Our primary goal is to develop an interaction topology theory that describe the whole interaction system rather than just intersections in the system. We hope to characterize the interactive systems in science and engineering, such as protein-protein interactions and cell-cell interactions.  The concept of interaction cohomology has been featured in various works. For example, interaction cohomology was introduced to explore the topological connectivity of invariant sets in forward or backward self-similar systems \cite{sumi2005dynamics,sumi2009interaction}. More recently, Knill has given a different definition of interaction cohomology  to characterize the Wu characteristic \cite{knill2018cohomology}. In this work, we introduce the concept of interaction spaces to elucidate relationships among different spaces within a complex system. We  focus on exploring the homotopy and homology of these interaction spaces. While intersection homology  studies  stratified spaces, concentrating on the duality of cycles, interaction homology concerns coverings, with a specific emphasis on homotopy invariance. In contrast to \v{C}ech cohomology, which relies on information regarding whether sets in a covering intersect, interaction homology considers the intersections of subspaces within the covering, offering both homotopy and geometric characterizations. Moreover, the interaction cohomology discussed in the references \cite{sumi2005dynamics,sumi2009interaction} investigates self-similar systems, with its cohomology also grounded in \v{C}ech cohomology. Combining these aspects, we explore a framework of interaction homotopy and interaction homology, providing a comprehensive characterization of the topological information within a covering.

An interaction space is a topological space $X$ equipped with a covering $\mathcal{C}=\{X_{i}\}_{1\leq i\leq n}$ of $X$. A morphism between interaction spaces consists a family of continuous maps between spaces that preserve certain intersection relation. We then introduce the homotopy between interaction morphisms and investigate the group structure of the homotopy class. Let $\mathcal{C}_{X}$ and $\mathcal{C}_{Y}$ be two $n$-interaction spaces. It is worth noting that $[S\mathcal{C}_{X},\mathcal{C}_{Y}]$ does not have to be a group, while $[\mathcal{C}_{X},\Omega\mathcal{C}_{Y}]$ is a group. Here, $S\mathcal{C}_{X}$ is the suspension of $\mathcal{C}_{X}$, and $\Omega\mathcal{C}_{Y}$ is the loop space of $\mathcal{C}_{Y}$. This also implies that the natural injection of sets $A:[S\mathcal{C}_{X},\mathcal{C}_{Y}]\to [\mathcal{C}_{X},\Omega\mathcal{C}_{Y}]$ does not have to be a bijection. Furthermore, we introduce the concept of the interaction homotopy group and establish a connection between the interaction homotopy group and the usual homotopy group. Let $\mathcal{C}=\{(Y,y_{0}),(Y,y_{0})\}$ be a based interaction space. Let $\Omega Y$ be the loop space of $Y$ with the based point $\omega_{0}:S^{1}\to y_{0}$. Then there is an isomorphism $\pi_{0}(\Omega\mathcal{C})\cong\pi_{0}(F(\Omega Y-\omega_{0},2))$ between the interaction homotopy group and the homotopy group (Section \ref{subsection:homotopy_group}).

Within the above homotopy, singular interaction homology can also be defined and has been proven to be a functor. Moreover, the singular interaction homology is a homotopy invariant (Theorem \ref{theorem:homotopy_invariant}).
\begin{theorem}
    Let $f,g: \mathcal{C}_{X}\to \mathcal{C}_{Y}$ be morphisms of interaction spaces. If $f$ and $g$ are homotopic, then we have $H_{\ast}(f)=H_{\ast}(g)$.
\end{theorem}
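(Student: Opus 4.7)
The plan is to adapt the classical prism-operator proof of the homotopy invariance of singular homology to the interaction setting. Recall that for an ordinary homotopy $F\colon X\times I\to Y$ between $f$ and $g$, one defines a prism operator $P_{n}\colon S_{n}(X)\to S_{n+1}(Y)$ by decomposing $\Delta^{n}\times I$ into $(n+1)$-simplices and summing, with signs, the composites of $F\circ(\sigma\times\mathrm{id})$ with the affine inclusions $\Delta^{n+1}\hookrightarrow\Delta^{n}\times I$. A direct combinatorial computation then yields the chain homotopy identity $\partial P+P\partial=g_{\#}-f_{\#}$, which forces $H_{\ast}(f)=H_{\ast}(g)$ on homology.

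To extend this to interaction spaces, I would first unpack an interaction homotopy between $f$ and $g$ as a family of ordinary homotopies, one for each intersection arising from the covering $\mathcal{C}_{X}=\{X_{i}\}$, compatible with the inclusions of deeper intersections into shallower ones. The singular interaction chain complex, as suggested by the earlier discussion, is built from tuples of singular simplices whose images sit in the corresponding intersections of covering members, so each entry of such a tuple can be fed into the classical prism operator built from the matching component of the interaction homotopy. Assembling these component-wise prism operators produces a candidate chain homotopy $P$ on the interaction chain complex.

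The rest of the proof then splits into two verifications. The first is that $P$ actually lands in the interaction chain complex, i.e., that the tuples of $(n+1)$-simplices produced still satisfy the intersection constraints coming from $\mathcal{C}_{Y}$. This is where the hypothesis that the given homotopy is an interaction homotopy, rather than a tuple of independent ordinary homotopies, is used: the required compatibility of the component homotopies with deeper intersections translates directly into the analogous compatibility of their prism outputs. The second is the chain homotopy identity $\partial P+P\partial=g_{\#}-f_{\#}$, which follows from the classical identity applied component-wise, since the boundary on the interaction complex acts diagonally on the simplicial boundary and on the covering index.

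I expect the main obstacle to be the first verification. The classical prism computation is self-contained once one has a single homotopy on a single space; but in the interaction setting one must keep careful track of which component of the homotopy each piece of a tuple is being fed into, and ensure that the resulting tuple of higher-dimensional simplices still obeys the intersection relation defining membership in the interaction chain complex. Once this bookkeeping is correctly set up, the chain homotopy identity and the passage to homology are routine.
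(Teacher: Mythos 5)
Your proposal is essentially the paper's proof: take the classical prism operators $P_{i}$ associated to the component homotopies $H_{i}\colon X_{i}\times I\to Y_{i}$, assemble them into a chain homotopy on $\bigotimes_{i}S_{\ast}(X_{i})$ via the standard telescoping formula $h=\sum_{i}g_{1}\otimes\cdots\otimes g_{i-1}\otimes P_{i}\otimes f_{i+1}\otimes\cdots\otimes f_{n}$ (the naive diagonal sum of the $h_{i}$ alone does not satisfy the chain-homotopy identity), and use the interaction condition on $H$ to pass to the interaction complex. The one bookkeeping point to fix is that $IS_{\ast}$ is a \emph{quotient} of the tensor product by the submodule $T_{\ast}$ spanned by tuples with $\bigcap_{i}\im\sigma_{i}=\emptyset$, not a subcomplex of interacting tuples, so the verification is not that prism outputs ``still satisfy the intersection constraints'' but that $h\bigl(T_{\ast}(\{X_{i}\})\bigr)\subseteq T_{\ast}(\{Y_{i}\})$, which is exactly the ``only if'' half of condition $(\ast)$ for $H$: a common point of the images $\im H_{i}(\sigma_{i}\times\mathrm{id})$ would force a common point of the $\im(\sigma_{i}\times\mathrm{id})$, hence of the $\im\sigma_{i}$.
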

In the simplicial case, we introduce the simplicial interaction homology and the simplicial interaction cohomology. The interaction simplicial maps are exactly the morphisms in the category of interaction spaces. This perspective provides us with a combinatorial view to deal with the interaction homology. The simplicial interaction cohomology introduced here bears similarity to the one defined in \cite{knill2018cohomology}, which is related to the Wu characteristic. Furthermore, a dual isomorphism exists between interaction cohomology and interaction homology over a field (Theorem \ref{theorem:cohomology}).

The paper is organized as follows. In the next section, we will introduce the concept of interaction spaces and focus on the homotopy classes of interaction spaces. In Section \ref{section:singular_homology}, the interaction singular homology is introduced and proved to be a homotopy invariant. Finally, we will introduce the interaction simplicial homology and cohomology to facilitate their applications in point cloud data.

\section{Interaction homotopy}\label{section:interaction}
In this section, we present the basic theory of interaction homotopy and explore its homotopy classes as well as its group structure. Interaction homotopy behaves differently from usual homotopy in many ways. The homotopy properties of interaction spaces depend heavily on how these spaces intersect with each other.

\subsection{Formal homotopy and homotopy}
An \emph{$n$-interaction space} $(X,\mathcal{C})$ is a topological space $X$ equipped with a covering $\mathcal{C}=\{X_{i}\}_{1\leq i\leq n}$ of $X$. A \emph{morphism of $n$-interaction spaces}, denoted by $f=\{f_{i}\}_{1\leq i\leq n}:(X,\{X_{i}\}_{1\leq i\leq n})\to (Y,\{Y_{i}\}_{1\leq i\leq n})$, is a family of continuous maps of topological spaces $f_{i}:X_{i}\to Y_{i},1\leq i\leq n$ satisfying the following condition:
\begin{equation}\label{interaction_condition}
   \text{For any $1\leq i,j\leq n$, $f_{i}(x_{i})=f_{j}(x_{j})$ if and only if $x_{i}=x_{j}$, where $x_{i}\in X_{i},x_{j}\in X_{j}$.}\tag{$\ast$}
\end{equation}
The condition (\ref{interaction_condition}) can be viewed as a structure-preserving property of the morphism of $n$-interaction spaces.
The category of $n$-interaction spaces consists of the interaction spaces equipped with the morphisms of interaction spaces, and we denote this category by $\mathbf{IT_{n}}$.

%\ast is to ensure the homology invariant under homotopy

\begin{remark}
One can construct the category of $n$-interaction spaces, where morphisms are not required to satisfy the condition (\ref{interaction_condition}). In this scenario, homotopy and related concepts can still be defined. Specifically, when considering the case of $n=2$ and $X_{2}\subseteq X_{1}$ for each covering $\mathcal{C}=\{X_{1},X_{2}\}$, this reduces to the relative space.
\end{remark}

A morphism of interaction spaces $f:(X,\{X_{i}\}_{1\leq i\leq n})\to (Y,\{Y_{i}\}_{1\leq i\leq n})$ is an isomorphism if there exists a morphism $g:(Y,\{Y_{i}\}_{1\leq i\leq n})\to (X,\{X_{i}\}_{1\leq i\leq n})$ such that $g\circ f=\mathrm{id}$ and $f\circ g=\mathrm{id}$.
If $f:(X,\{X_{i}\}_{1\leq i\leq n})\to (Y,\{Y_{i}\}_{1\leq i\leq n})$ is a bijection in the category $\mathbf{IT_{n}}$, then it is an isomorphism.

\begin{definition}
    Let $f,g:(X,\{X_{i}\}_{1\leq i\leq n})\to (Y,\{Y_{i}\}_{1\leq i\leq n})$ be morphisms of interaction spaces.

    We say that $f$ and $g$ are \emph{formal homotopic}, denoted by $f \simeq g$, if there exists a morphism of interaction spaces $H:(X\times I,\{X_{i}\times I\}_{1\leq i\leq n})\to (Y,\{Y_{i}\}_{1\leq i\leq n})$ such that $H(x,0)=f$ and $H(x,1)=g$ for all $x\in X$.

    We say $f$ is \emph{homotopic} to $g$ if there are a family of formal homotopies $f\simeq f_{1}\simeq\cdots\simeq  f_{k}\simeq g$ connecting $f$ and $g$, denoted by $f \sim g$.
\end{definition}

\begin{definition}
A morphism of interaction spaces $f:(X,\{X_{i}\}_{1\leq i\leq n})\to (Y,\{Y_{i}\}_{1\leq i\leq n})$ is a \emph{homotopy equivalence} if there exists a morphism of interaction spaces $g:(Y,\{Y_{i}\}_{1\leq i\leq n})\to (X,\{X_{i}\}_{1\leq i\leq n})$ such that $f\circ g\sim \mathrm{id}|_{(Y,\{Y_{i}\}_{1\leq i\leq n})}$ and $g\circ f\sim \mathrm{id}|_{(X,\{X_{i}\}_{1\leq i\leq n})}$.
\end{definition}

\begin{example}
The formal homotopy of interaction morphisms is always reflective and symmetric. But it is not transitive. For example, let $X_{1}=\{x_{1}\},X_{2}=\{x_{2}\}$ be disjoint single point spaces. Let $Y_{1}=Y_{2}=Y$ be the unit disk.
Consider the morphisms of interaction spaces $f=\{f_{1},f_{2}\},g=\{g_{1},g_{2}\}:(X,\{X_{1},X_{2}\})\to (Y,\{Y_{1},Y_{2}\})$, where $f_{1}(x_{1})=a,f_{2}(x_{2})=b,g_{1}(x_{1})=b,g_{2}(x_{2})=c$ for disjoint points $a,b,c\in Y$. If there is a formal homotopy $H=\{H_{1},H_{2}\}:(X\times I,\{X_{1}\times I,X_{2}\times I\})\to (Y,\{Y_{1},Y_{2}\})$ from $f$ to $g$, then one has $H_{1}(x_{1},1)=H_{2}(x_{2},0)$. This contradicts the condition (\ref{interaction_condition}). Note that $h=\{f_{1},g_{2}\}:(X,\{X_{1},X_{2}\})\to (Y,\{Y_{1},Y_{2}\})$ is a morphism of interaction spaces. Then the morphism $F=(F_{1},F_{2}):(X\times I,\{X_{1}\times I,X_{2}\times I\})\to (Y,\{Y_{1},Y_{2}\})$ with $F_{1}(x,t)=f_{1}(x)$ and $F_{2}(x,t)=(1-t)f_{2}(x)+tg_{2}(x)$ gives a formal homotopy from $f$ to $h$. Similarly, the morphism $G=(G_{1},G_{2}):(X\times I,\{X_{1}\times I,X_{2}\times I\})\to (Y,\{Y_{1},Y_{2}\})$ with $G_{1}(x,t)=(1-t)f_{1}(x)+tg_{1}(x)$ and $G_{2}(x,t)=g_{2}(x)$ gives a formal homotopy from $h$ to $g$. This shows that $f$ is homotopic to $g$ rather than formal homotopic to $g$.
\end{example}

Let $f,g:(X,\{X_{i}\}_{1\leq i\leq n})\to (Y,\{Y_{i}\}_{1\leq i\leq n})$ be morphisms of interaction spaces. If $f\simeq g:(X,\{X_{i}\}_{1\leq i\leq n})\to (Y,\{Y_{i}\}_{1\leq i\leq n})$, then one has $f_{i}\simeq g_{i}:X_{i}\to Y_{i}$ for each $1\leq i\leq n$. However, the converse is not always true.
Indeed, the homotopy $H_{i}:X_{i}\times I\to Y_{i}$ from $f_{i}:X_{i}\to Y_{i}$ to $g_{i}:X_{i}\to Y_{i}$ for $1\leq i\leq n$ does not always extend to a morphism of interaction spaces.

\begin{example}
    Let $(X,\{X_{1},X_{2}\})$ and $(Y,\{Y_{1},Y_{2}\})$ be two interaction spaces given by
    \begin{equation*}
      X_{1}=\{(x,y)|x^{2}+y^{2}=1\},X_{2}=\{(0,y)|-1\leq y\leq 1\}, Y_{1}=Y_{2}=\{(x,y)|x^{2}+y^{2}\leq 1\}.
    \end{equation*}
    Here, $X=X_{1}\cup X_{2}$ and $Y=Y_{1}\cup Y_{2}$.
    \begin{figure}[h!]
      \centering
      \begin{tikzpicture}[scale=0.6]
      \begin{scope}
      \draw[thick,blue]  (0,-1)--(0,1);
      \draw[thick,red]  (0,0) circle(1);
      \draw[-latex,thick] (2,1) arc (160:90:3 and 1);
      \filldraw[color=black!30] (7,1.5) circle(1);
      \filldraw[color=black!30] (7,-1.5) circle(1);
      \draw[thick,red]  (7,1.5) circle(1);
      \draw[-latex,thick] (2,-1) arc (200:270:3 and 1);
      \draw[thick,red]  (7,-0.5) arc (90:270:1);
      \draw[thick,blue]  (7,-2.5) arc (270:450:1);
      \draw[thick,blue]  (7,0.5)--(7,2.5);
%      \filldraw[color=red!60, fill=red!5, very thick] (-1,0) circle (1.5);
      \draw (-1.2,0.3) node[above]{$X_{1}$};
%      \draw (8.6,1.8) node[above]{$f_{1}(X_{1})$};
      \draw (0,0) node[right]{$X_{2}$};
      \draw (3.2,1.5) node[above]{$f$};
      \draw (3.2,-1.5) node[above]{$g$};
      \end{scope}
      \end{tikzpicture}
    \end{figure}
    Consider the morphisms of interaction spaces $f=\{f_{1},f_{2}\},g=\{g_{1},g_{2}\}$ from $(X,\{X_{1},X_{2}\})$ to $(Y,\{Y_{1},Y_{2}\})$ given by
    \begin{equation*}
      f_{1}(x,y)=(x,y),f_{2}(0,y)=(0,y),g_{1}(x,y)=(-|x|,y),g_{2}(0,y)=(\sqrt{1-y^{2}},y).
    \end{equation*}
    A straightforward verification shows that $f_{1}\simeq g_{1}:X_{1}\to Y_{1}$ and $f_{2}\simeq g_{2}:X_{2}\to Y_{2}$. Suppose there is a formal homotopy $H=\{H_{1},H_{2}\}:(X\times I,\{X_{1}\times I,X_{2}\times I\})\to (Y,\{Y_{1},Y_{2}\})$ from $f$ to $g$ such that $H_{1}(u,0)=f_{1}(u),H_{2}(v,0)=f_{2}(v)$ and $H_{1}(u,1)=g_{1}(u),H_{2}(v,1)=g_{2}(v)$ for any $u\in X_{1},v\in X_{2}$. Note that $g_{2}(0,0)=(1,0)=f_{1}(1,0)$. One has $H_{2}((0,0),1)=H_{1}((1,0),0)$, which does not satisfy the condition (\ref{interaction_condition}). Hence, there is no one-step homotopy from $f$ to $g$. Moreover, we can prove that $f$ is not homotopic to $g$. Indeed, let $(F_{1},G_{1}),\dots,(F_{k},G_{k})$ be a family of one-step homotopies from $f$ to $g$. If there exist elements $x_{1}\in X_{1},x_{2}\in X_{2}\setminus\{(0,-1),(0,1)\}$ and $t\in I$ such that $F_{i}(x_{1},t)=G_{i}(x_{2},t)$ for some $i$, then $(F_{i},G_{i})$ does not satisfy the condition (\ref{interaction_condition}), contradiction. So we always have $F_{i}(x_{1},t)\neq G_{i}(O,t)$ for any $i$ and $t\in I$. Here, $O=(0,0)\in X_{2}$. Consider the continuous map $L_{i}(-,t):X_{1}\to S^{1}$, where $L_{i}(x,t)$ is given by the intersection of the ray $\overrightarrow{G_{i}(O,t)F_{i}(x_{1},t) }$ and the unit circle. Then $L_{1},\dots,L_{k}$ give a one-step homotopy from $L_{1}(-,0)=\mathrm{id}_{S^{1}}$ to $L_{k}(-,1)=g_{1}$. This is a contradiction. Summarily, the morphisms $f$ and $g$ are not homotopic to each other.
\end{example}

\subsection{Homotopy class}
Let $(X,\{X_{i}\}_{1\leq i\leq n})$ and $(Y,\{Y_{i}\}_{1\leq i\leq n})$ be two interaction spaces. We denote
\begin{equation*}
    [(X,\{X_{i}\}_{1\leq i\leq n}),(Y,\{Y_{i}\}_{1\leq i\leq n})]
\end{equation*}
the equivalence class of morphisms from $(X,\{X_{i}\}_{1\leq i\leq n})$ to $(Y,\{Y_{i}\}_{1\leq i\leq n})$ with respect to homotopy.

A \emph{based interaction space} $(X,\{(X_{i},x_{i})\}_{1\leq i\leq n})$ is an interaction space $(X,\{X_{i}\}_{1\leq i\leq n})$ with the based spaces $(X_{i},x_{i})$ for $1\leq i\leq n$. For the sake of simplicity, we use $\{X_{i}\}_{1\leq i\leq n}$ to represent the $n$-interaction space $(X,\{X_{i}\}_{1\leq i\leq n})$, and $\{(X_{i},x_{i})\}_{1\leq i\leq n}$ to represent the based $n$-interaction space $(X,\{(X_{i},x_{i})\}_{1\leq i\leq n})$.

Now, we will focus on the based case. Let $\{(X_{i},x_{i})\}_{1\leq i\leq n}$ be a based interaction space. Let $(S^{1},s_{0})$ be a based circle. The \emph{loop space} $\Omega\{(X_{i},x_{i})\}_{1\leq i\leq n}$ of $\{(X_{i},x_{i})\}_{1\leq i\leq n}$ is the interaction space $\{(X_{i},x_{i})^{(S^{1},s_{0})}\}_{1\leq i\leq n}$, denoted by $\{(\Omega X_{i},\omega_{i})\}_{1\leq i\leq n}$. Here, $\omega_{i}:S^{1}\to X_{i}$ is the constant map given by $\omega(s)=x_{i}$ for all $s\in S^{1}$.
Note that $\Omega (X_{i}\cap X_{j})=\Omega X_{i}\cap \Omega X_{j}$. Thus the construction $\Omega:\mathbf{IT_{n}}\to \mathbf{IT_{n}}$ is functorial. The \emph{suspension} $S\{(X_{i},x_{i})\}_{1\leq i\leq n}$ of $\{(X_{i},x_{i})\}_{1\leq i\leq n}$ is defined by the interaction space $\{(S^{1}\wedge X_{i},\ast)\}_{1\leq i\leq n}$. Here, $\ast$ denotes the corresponding based point for each $i$. Two based spaces have non-empty intersection (as based space) if they have the same based point. Thus the suspension is also functorial since $S(X_{i}\cap X_{j})=SX_{i}\cap SX_{j}$.

\begin{proposition}
    Let $\{(X_{i},x_{i})\}_{1\leq i\leq n}$ and $\{(Y_{i},y_{i})\}_{1\leq i\leq n}$ be based interaction spaces. Suppose that $X_{i}$ is Hausdorff for each $1\leq i\leq n$. Then we have a natural injection of sets
    \begin{equation*}
        A:[\{(SX_{i},\ast)\}_{1\leq i\leq n},\{(Y_{i},y_{i})\}_{1\leq i\leq n}]\to [\{(X_{i},x_{i})\}_{1\leq i\leq n},\{(\Omega Y_{i},\omega_{i})\}_{1\leq i\leq n}].
    \end{equation*}
\end{proposition}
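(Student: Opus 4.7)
The plan is to adapt the classical based loop-suspension adjunction $\mathrm{Map}_{\ast}(SX, Y) \cong \mathrm{Map}_{\ast}(X, \Omega Y)$ to the interaction setting. In our framework, only one direction of this adjunction is guaranteed to preserve the interaction condition $(\ast)$, which explains why $A$ is merely an injection rather than a bijection.

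First I would define $A$ on representatives by the usual adjoint formula: for a morphism $f = \{f_i\} : \{(SX_i, \ast)\} \to \{(Y_i, y_i)\}$, set $\hat f_i : X_i \to \Omega Y_i$ by $\hat f_i(x)(s) = f_i(s \wedge x)$, and let $A[f] := [\hat f]$. The Hausdorff hypothesis on each $X_i$ is precisely what licenses the classical exponential law, so that each $\hat f_i$ is a continuous based map. The key step is verifying $(\ast)$ for $\hat f$: if $\hat f_i(x) = \hat f_j(x')$ with $x \in X_i$, $x' \in X_j$ both non-basepoint, then evaluating at any $s \neq s_0$ gives $f_i(s \wedge x) = f_j(s \wedge x')$ between non-basepoints of $SX_i$ and $SX_j$, and $(\ast)$ for $f$ forces $s \wedge x = s \wedge x'$, hence $x = x' \in X_i \cap X_j$; the basepoint cases are immediate from basedness. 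The converse direction follows from $(\ast)$ for $f$ evaluated at each $s \wedge x$ with $x \in X_i \cap X_j$.

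Next I would show that $A$ descends to $\sim$-classes. Given a formal homotopy $H : \{(SX_i \times I, \ast \times I)\} \to \{(Y_i, y_i)\}$ from $f$ to $g$, the pointwise adjoint $\hat H_i(x,t)(s) = H_i(s \wedge x, t)$ is a formal homotopy from $\hat f$ to $\hat g$; the verification of $(\ast)$ repeats the argument of the previous paragraph with $t$ carried along. Chaining yields the analogous statement for $\sim$. For injectivity, suppose $A[f] = A[g]$ and take a chain of formal homotopies $K^{(1)}, \dots, K^{(m)}$ between $\hat f$ and $\hat g$, each $K^{(\ell)} : \{(X_i \times I, x_i \times I)\} \to \{(\Omega Y_i, \omega_i)\}$. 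I form the inverse adjoint $\tilde K^{(\ell)}_i(s \wedge x, t) = K^{(\ell)}_i(x,t)(s)$, which is well-defined on $SX_i \times I$ because $K^{(\ell)}_i(x_i, t) = \omega_i$ and $K^{(\ell)}_i(x, t)(s_0) = y_i$. Condition $(\ast)$ for $\tilde K^{(\ell)}$ follows by reversing the argument above: a candidate equality $\tilde K^{(\ell)}_i(s \wedge x, t) = \tilde K^{(\ell)}_j(s' \wedge x', t')$ between non-basepoints forces $s = s'$ and then, via $(\ast)$ for $K^{(\ell)}$, $x = x'$ and $t = t'$. Assembling these chains gives $f \sim g$.

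Finally, naturality of $A$ in both arguments reduces to the naturality of the classical adjunction applied indexwise: precomposition with $\{Su_i\}$ corresponds to precomposition with $\{u_i\}$, and postcomposition with $\{v_i\}$ corresponds to postcomposition with $\{\Omega v_i\}$. The main obstacle throughout is the repeated verification of $(\ast)$ for adjoint objects, and this is precisely what makes $A$ only an injection: starting from an arbitrary morphism $\{X_i\} \to \{\Omega Y_i\}$, there is no reason its inverse adjoint should satisfy $(\ast)$ on $\{SX_i\}$, so the adjunction does not invert in general.
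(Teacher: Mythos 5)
Your first half — the adjoint formula $\hat f_i(x)(s)=f_i[s,x]$, the verification of condition $(\ast)$ for $\{\hat f_i\}$ by evaluating at all $s\in S^1$ and feeding the resulting equalities $f_i[s,x_i]=f_j[s,x_j]$ back into $(\ast)$ for $f$, and the descent through formal homotopies via $\hat H_i(x,t)(s)=H_i([s,x],t)$ — is essentially the paper's argument and is fine.

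The injectivity step, however, has a genuine gap. You take a chain of formal interaction homotopies $K^{(\ell)}:\{(X_i\times I,\cdot)\}\to\{(\Omega Y_i,\omega_i)\}$ joining $\hat f$ to $\hat g$ and claim that the inverse adjoints $\tilde K^{(\ell)}_i([s,x],t)=K^{(\ell)}_i(x,t)(s)$ satisfy $(\ast)$, on the grounds that an equality $\tilde K^{(\ell)}_i([s,x],t)=\tilde K^{(\ell)}_j([s',x'],t')$ ``forces $s=s'$'' and then lets you invoke $(\ast)$ for $K^{(\ell)}$. Neither step is justified: that equality is an equality of single points of $Y_i$ and $Y_j$, namely $K^{(\ell)}_i(x,t)(s)=K^{(\ell)}_j(x',t')(s')$, so nothing forces $s=s'$; and even granting $s=s'$, agreement of two loops at one parameter value does not make them equal as elements of $\Omega Y_i$ and $\Omega Y_j$, which is what $(\ast)$ for $K^{(\ell)}$ actually governs. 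Two distinct loops that merely cross each other already defeat the argument. Indeed, your own closing observation — that the inverse adjoint of an arbitrary morphism into $\{\Omega Y_i\}$ has no reason to satisfy $(\ast)$ — applies verbatim to the homotopies $K^{(\ell)}$, so your injectivity argument is in tension with your explanation of why $A$ need not be surjective. The paper does not take this route: it deduces injectivity from the componentwise natural equivalences $A_i$, i.e., from the square in which both interaction homotopy sets map to the products $\prod_i[(SX_i,\ast),(Y_i,y_i)]$ and $\prod_i[(X_i,x_i),(\Omega Y_i,\omega_i)]$ and $\prod_i A_i$ is a bijection (a terse argument, and one that itself relies on those forgetful maps to the products being injective, but in any case a different mechanism from the inverse adjoint of the homotopies). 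To repair your write-up you would need either that route or some substitute for the failed $(\ast)$ check on $\tilde K^{(\ell)}$.
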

\begin{proof}
    Let $\{f_{i}\}_{1\leq i\leq n}:\{(SX_{i},\ast)\}_{1\leq i\leq n}\to \{(Y_{i},y_{i})\}_{1\leq i\leq n}$ be a morphism of interaction spaces. Note that there is a natural equivalence
    \begin{equation*}
        A_{i}:[(SX_{i},\ast),(Y_{i},y_{i})]\to [(X_{i},x_{i}),(\Omega Y_{i},\omega_{i})]
    \end{equation*}
    for each $i$. Here, $A_{i}[f_{i}]=[\hat{f}_{i}]$ and $(\hat{f}_{i}(x))(s)=f_{i}[s,x]$ for $x\in X_{i},s\in S^{1}$ and $[s,x]\in SX=S^{1}\wedge X$. We will show the map
    \begin{equation*}
        A:[\{(SX_{i},\ast)\}_{1\leq i\leq n},\{(Y_{i},y_{i})\}_{1\leq i\leq n}]\to [\{(X_{i},x_{i})\}_{1\leq i\leq n},\{(\Omega Y_{i},\omega_{i})\}_{1\leq i\leq n}]
    \end{equation*}
    given by $A[\{f_{i}\}_{1\leq i\leq n}]=[\{\hat{f}_{i}\}_{1\leq i\leq n}]$ is well defined. For any $x\in X_{i}\cap X_{j}$ and $s\in S^{1}$, one has $(\hat{f}_{i}(x))(s)=f_{i}[s,x]=f_{j}[s,x]=(\hat{f}_{j}(x))(s)$. On the other hand, if $\hat{f}_{i}(x_{i})=\hat{f}_{j}(x_{j})$ for some $x_{i}\in X_{i},x_{j}\in X_{j}$, then we obtain
    \begin{equation*}
      f_{i}[s,x_{i}]=(\hat{f}_{i}(x_{i}))(s)=(\hat{f}_{j}(x_{j}))(s)=f_{j}[s,x_{j}]
    \end{equation*}
    for any $s\in S^{1}$. Since $\{f_{i}\}_{1\leq i\leq n}:\{(SX_{i},\ast)\}_{1\leq i\leq n}\to \{(Y_{i},y_{i})\}_{1\leq i\leq n}$ is a morphism of interaction spaces, we have $[s,x_{i}]=[s,x_{j}]$ for any $s\in S^{1}$. Consequently, we deduce that $x_{i}=x_{j}$. Thus, the condition (\ref{interaction_condition}) is satisfied. Hence, $\{\hat{f}_{i}\}_{1\leq i\leq n}$ is a morphism of interaction spaces. Suppose $H=\{H_{i}\}_{1\leq i\leq n}:\{(SX_{i}\times I,\ast)\}_{1\leq i\leq n}\to \{(Y_{i},y_{i})\}_{1\leq i\leq n}$ is a formal homotopy from $\{f_{i}\}_{1\leq i\leq n}$ to $\{g_{i}\}_{1\leq i\leq n}$. Let $\hat{H}_{i}(x,t)(s)=H_{i}([s,x],t)$. For any $x\in X_{i}\cap X_{j}$, $t\in I$ and $s\in S^{1}$, one has $\hat{H}_{i}(x,t)(s)=H_{i}([s,x],t)=H_{j}([s,x],t)=\hat{H}_{j}(x,t)(s)$. Besides, if $\hat{H}_{i}(x_{i},t)=\hat{H}_{j}(x_{j},t')$ for some $x_{i}\in X_{i},x_{j}\in X_{j}$ and $t,t'\in I$, we have $H_{i}([s,x_{i}],t)=H_{j}([s,x_{j}],t')$ for any $s\in S^{1}$. Since $H$ is a morphism of interaction spaces, one obtains $[s,x_{i}]=[s,x_{j}]$ and $t=t'$. It follows that $x_{i}=x_{j}$. This implies that $\hat{H}$ is a formal homotopy from $\{\hat{f}_{i}\}_{1\leq i\leq n}$ to $\{\hat{g}_{i}\}_{1\leq i\leq n}$. Moreover, $A$ maps homotopy equivalence to homotopy equivalence through a sequence of formal homotopies.
Thus the map $A$ is well defined. The map $A$ is a natural injection in view of the natural equivalence of $A_{i}$ for $1\leq i\leq n$.
\end{proof}

\subsection{The group structure}
Recall that the loop space $(\Omega Y,\omega)$ is an $H$-group with the multiplication map $\mu:\Omega Y\times \Omega Y\to \Omega Y$ given by
\begin{equation*}
    \mu(\omega,\omega')=\left\{
        \begin{array}{ll}
        \omega(2t), & \hbox{$0\leq t\leq 1/2$;} \\
        \omega'(2t-1), & \hbox{$1/2\leq t\leq 1$.}
        \end{array}
    \right.
\end{equation*}
Here, $\omega,\omega'\in \Omega Y$. Thus we can defined a multiplication map on $\{(\Omega Y_{i},\omega_{i})\}_{1\leq i\leq n}$ by
\begin{equation*}
    \mu:\{(\Omega Y_{i},\omega_{i})\}_{1\leq i\leq n}\times \{(\Omega Y_{i},\omega_{i})\}_{1\leq i\leq n}\to  \{(\Omega Y_{i},\omega_{i})\}_{1\leq i\leq n},
\end{equation*}
where $\mu (\{\alpha_{i}\}_{1\leq i\leq n},\{\beta_{i}\}_{1\leq i\leq n})=\{\mu(\alpha_{i},\beta_{i})\}_{1\leq i\leq n}$ for $\alpha_{i},\beta_{i}\in \Omega Y_{i},i=1,\dots,n$.

\begin{proposition}\label{proposition:group1}
Let $\{(X_{i},x_{i})\}_{1\leq i\leq n}$ and $\{(Y_{i},y_{i})\}_{1\leq i\leq n}$ be based interaction spaces. Then the equivalence class $[\{(X_{i},x_{i})\}_{1\leq i\leq n},\{(\Omega Y_{i},\omega_{i})\}_{1\leq i\leq n}]$ has a group structure with the product $[f]\cdot [g]$ given by the homotopy class of the composition
\begin{equation*}
\begin{split}
    \{X_{i}\}_{1\leq i\leq n}&\stackrel{\Delta}{\longrightarrow} \{X_{i}\}_{1\leq i\leq n}\times \{X_{i}\}_{1\leq i\leq n}\\
    &\stackrel{f\times g}{\longrightarrow}  \{\Omega Y_{i}\}_{1\leq i\leq n}\times \{\Omega Y_{i}\}_{1\leq i\leq n}\stackrel{\mu}{\longrightarrow} \{\Omega Y_{i}\}_{1\leq i\leq n}.
\end{split}
\end{equation*}
Here, $\Delta$ is the diagonal map given by $\Delta(\{a_{i}\}_{1\leq i\leq n})=(\{a_{i}\}_{1\leq i\leq n},\{a_{i}\}_{1\leq i\leq n})$ for $\{a_{i}\}_{1\leq i\leq n}\in \{X_{i}\}_{1\leq i\leq n}$. The identity of the group is the morphism $[\{c_{\omega_{i}}\}_{1\leq i\leq n}]$, where $c_{\omega_{i}}:(X_{i},x_{i})\to (\Omega Y_{i},\omega_{i})$ is a null homotopic map.
\end{proposition}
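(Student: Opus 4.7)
The natural plan is to exhibit $\Omega\mathcal{C}_Y = \{(\Omega Y_i,\omega_i)\}_{1\leq i\leq n}$ as an $H$-group in the category $\mathbf{IT_n}$, so that $[\mathcal{C}_X,\Omega\mathcal{C}_Y]$ inherits a group structure by the classical contravariant argument that $[-,G]$ takes values in groups whenever $G$ is an $H$-group. Everything on each component $\Omega Y_i$ is known from ordinary homotopy theory; the whole content is to verify that the relevant structure maps and structure homotopies respect condition~(\ref{interaction_condition}).

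The first step is to check that the componentwise multiplication $\mu$ is itself a morphism in $\mathbf{IT_n}$. The key calculation is this: if $\mu(\alpha_i,\beta_i) = \mu(\alpha_j,\beta_j)$ as a loop lying in $\Omega Y_i\cap\Omega Y_j$, restrict to $t\in[0,1/2]$ to recover $\alpha_i = \alpha_j$ and to $t\in[1/2,1]$ to recover $\beta_i = \beta_j$, so that (\ref{interaction_condition}) holds for $\mu$. The diagonal $\Delta$ is trivially a morphism, and $f\times g$ is a morphism whenever $f$ and $g$ are, because (\ref{interaction_condition}) for $f\times g$ is just the conjunction of the conditions on each factor. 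Hence the composite $\mu\circ(f\times g)\circ\Delta$ is a well-defined morphism in $\mathbf{IT_n}$.

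Next I would show the product descends to homotopy classes. Given formal interaction homotopies $F: f\simeq f'$ and $G: g\simeq g'$, the map $\mu\circ(F\times G)\circ(\Delta\times\mathrm{id}_I)$ is again a morphism of interaction spaces (by the same factor-by-factor verification) and supplies a formal homotopy between $\mu\circ(f\times g)\circ\Delta$ and $\mu\circ(f'\times g')\circ\Delta$; the equivalence relation $\sim$ is then handled by concatenating one-step homotopies.

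Finally I would verify the group axioms. For associativity, the classical reparametrization homotopy on each $\Omega Y_i$ between $\mu\circ(\mu\times\mathrm{id})$ and $\mu\circ(\mathrm{id}\times\mu)$ only rescales the loop variable $t$ without altering pointwise images, so it preserves (\ref{interaction_condition}) at each time and assembles into an interaction homotopy. For the identity, the null-homotopic morphism $\{c_{\omega_i}\}$ gives, composed with any $f$, a map whose classical folding homotopy to $f$ is again componentwise a reparametrization; checking (\ref{interaction_condition}) along this homotopy yields $[f]\cdot[\{c_{\omega_i}\}] = [f]$ and symmetrically $[\{c_{\omega_i}\}]\cdot[f] = [f]$. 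For inverses, the loop-reversal $\nu_i:\Omega Y_i\to\Omega Y_i$ is a morphism in $\mathbf{IT_n}$ by the same argument as for $\mu$, and the standard shrinking homotopy from $\mu(\alpha,\nu\alpha)$ to the constant loop is again pointwise given by reparametrization. The main obstacle is exactly this last point: one has to confirm that each classical $H$-space homotopy on $\Omega Y_i$ is of the reparametrization-only type so that the interaction condition is preserved at every time slice, and in particular that the identity element $\{c_{\omega_i}\}$ can indeed be chosen as a morphism of interaction spaces in $\mathbf{IT_n}$; once these are in place, the $H$-group axioms transfer to $[\mathcal{C}_X,\Omega\mathcal{C}_Y]$ verbatim.
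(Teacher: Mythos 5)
Your overall strategy differs from the paper's in a way that matters. The paper does not attempt to make $\Omega\mathcal{C}_{Y}=\{(\Omega Y_{i},\omega_{i})\}_{1\leq i\leq n}$ into an $H$-group internally in $\mathbf{IT_{n}}$. Instead it observes that $[\{(X_{i},x_{i})\}_{1\leq i\leq n},\{(\Omega Y_{i},\omega_{i})\}_{1\leq i\leq n}]$ sits inside the product group $\prod_{i=1}^{n}[(X_{i},x_{i}),(\Omega Y_{i},\omega_{i})]$ and only checks that this subset is closed under the product and under the inverse $\nu\circ f$. Associativity, the unit law and the inverse law are then inherited from the ambient group, where the witnessing homotopies need only be ordinary componentwise homotopies --- they are never required to be interaction homotopies. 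Your closure computations (restricting $\mu(\alpha_{i},\beta_{i})=\mu(\alpha_{j},\beta_{j})$ to the two halves of $[0,1]$ to recover $\alpha_{i}=\alpha_{j}$ and $\beta_{i}=\beta_{j}$, and the analogous check for loop reversal) are exactly the ones the paper performs, so that portion of your argument is sound, as is the descent of the product to homotopy classes via $\mu\circ(F\times G)$.

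The gap is in the final step, where you try to transfer the $H$-group axioms. A formal homotopy in this paper is a morphism of interaction spaces $H:(X\times I,\{X_{i}\times I\}_{1\leq i\leq n})\to(Y,\{Y_{i}\}_{1\leq i\leq n})$, so condition (\ref{interaction_condition}) must hold on the whole cylinder: for $i\neq j$, $H_{i}(x_{i},s)=H_{j}(x_{j},s')$ must force $x_{i}=x_{j}$ \emph{and} $s=s'$. This cross-time requirement is exactly what the paper exploits in its own examples (e.g.\ $H_{1}(x_{1},1)=H_{2}(x_{2},0)$ counting as a violation). Your verification only addresses each fixed time slice. The classical reparametrization homotopies fix every loop that is invariant under the reparametrization --- in particular every constant loop --- so at any point $x\in X_{i}\cap X_{j}$ with $f_{i}(x)=f_{j}(x)$ a constant loop (for instance a shared basepoint sent to $\omega_{i}=\omega_{j}$, as happens in the paper's wedge-of-circles example), the unit, associativity and inverse homotopies satisfy $H_{i}(x,s)=H_{j}(x,s')$ for all $s\neq s'$, violating (\ref{interaction_condition}). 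Hence these structure homotopies are not in general formal interaction homotopies, and the $H$-group axioms do not ``transfer verbatim'' as you assert; you correctly flag this as the main obstacle but the proposed resolution (checking the condition slicewise) does not meet the paper's definition. This is precisely the difficulty the paper's subgroup-of-the-product argument is designed to sidestep.
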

\begin{proof}
Note that $[\{(X_{i},x_{i})\}_{1\leq i\leq n},\{(\Omega Y_{i},\omega_{i})\}_{1\leq i\leq n}]$ is a subset of the group $\prod\limits_{i=1}^{n}[(X_{i},x_{i}),(\Omega Y_{i},\omega_{i})]$. To show $[\{(X_{i},x_{i})\}_{1\leq i\leq n},\{(\Omega Y_{i},\omega_{i})\}_{1\leq i\leq n}]$ is a group, it suffices to prove that it is closed under group multiplication and inverse.

We will first prove that $\mu\circ (f\times g)\circ\Delta$ is a morphism of interaction spaces. Let $f=\{f_{i}\}_{1\leq i\leq n}$ and $g=\{g_{i}\}_{1\leq i\leq n}$.
Note that $f_{i}(x)=f_{j}(x)$ and $g_{i}(x)=g_{j}(x)$ for any $x\in X_{i}\cap X_{j}$. So we have $\mu(f_{i}(x),g_{i}(x))=\mu(f_{j}(x),g_{j}(x))$ for any $x\in X_{i}\cap X_{j}$. For $x_{i}\in X_{i},x_{j}\in X_{j}$, if $\mu(f_{i}(x_{i}),g_{i}(x_{i}))=\mu(f_{j}(x_{j}),g_{j}(x_{j}))$, we have $(f_{i}(x_{i}))(t)=(f_{j}(x_{j}))(t)$ and $(g_{i}(x_{i}))(t)=(g_{j}(x_{j}))(t)$ for $0\leq t\leq 1$. It follows that $x_{i}=x_{j}$.
Thus the set $[\{(X_{i},x_{i})\}_{1\leq i\leq n},\{(\Omega Y_{i},\omega_{i})\}_{1\leq i\leq n}]$ is closed under group multiplication.

On the other hand, for any morphism $f=\{f_{i}\}_{1\leq i\leq n}\in [\{(X_{i},x_{i})\}_{1\leq i\leq n},\{(\Omega Y_{i},\omega_{i})\}_{1\leq i\leq n}]$, the inverse map of $f$ is given by $\nu\circ f=\{\nu \circ f_{i}\}_{1\leq i\leq n}$. Here, $((\nu\circ f_{i})(x))(t)=(f_{i}(x))(1-t)$ for any $x\in X_{i}$ and $0\leq t\leq 1$.
Indeed, a straightforward verification shows that $\nu\circ f$ satisfies the condition (\ref{interaction_condition}).
Hence, the inverse $[f]^{-1}$ is also in $[\{(X_{i},x_{i})\}_{1\leq i\leq n},\{(\Omega Y_{i},\omega_{i})\}_{1\leq i\leq n}]$.
\end{proof}
For interaction spaces $\{(X_{i},x_{i})\}_{1\leq i\leq n}$ and $\{(Y_{i},y_{i})\}_{1\leq i\leq n}$, the equivalence class
\begin{equation*}
  [\{(SX_{i},\ast)\}_{1\leq i\leq n},\{(Y_{i},y_{i})\}_{1\leq i\leq n}]
\end{equation*}
does not have to be a group. The following example shows that the product is not closed in
\begin{example}\label{example:product}
Let $\{(X_{1},x_{1}),(X_{2},x_{2})\}$ be an interaction space of disjoint circles. Consider the interaction space $\{(Y_{1},y_{1}),(Y_{2},y_{2})\}$ given by
\begin{equation*}
  Y_{1}=\{(x,y)|x^{2}+(y-1)^{2}=1\},Y_{2}=\{(x,y)|x^{2}+(y+1)^{2}=1\}
\end{equation*}
and $y_{1}=(1,1),y_{2}=(1,-1)$. Let $h=\{h_{1},h_{2}\}:\{(X_{1},x_{1}),(X_{2},x_{2})\}\to \{(Y_{1},y_{1}),(Y_{2},y_{2})\}$ be a morphism of interaction spaces.
Note that $[h_{1}],[h_{2}]\in \pi_{1}(S^{1})$. If $h_{1}$ and $h_{2}$ are not null homotopic, they must be surjective maps between circles. Thus there exist points $a_{1}\in X_{1}$ and $a_{2}\in X_{2}$ such that $h_{1}(a_{1})=h_{2}(a_{2})$. This contradicts to the condition (\ref{interaction_condition}). So at least one of $h_{1},h_{2}$ is a null homotopic map.

Let $f=\{\theta_{1},c_{y_{2}}\},g=\{c_{y_{1}},\theta_{2}\}:\{(X_{1},x_{1}),(X_{2},x_{2})\}\to \{(Y_{1},y_{1}),(Y_{2},y_{2})\}$ be morphisms of interaction spaces. Here, $c_{y_{1}},c_{y_{2}}$ are constant maps and $\theta_{1},\theta_{2}$ are homoemorphisms between circles. Suppose $[\{(X_{1},x_{1}),(X_{2},x_{2})\},\{(Y_{1},y_{1}),(Y_{2},y_{2})\}]$ is a group. Assume that $[h]$ is the product of $[f]$ and $[g]$. Since at least one of $h_{1},h_{2}$ is null homotopic, one has $[h]=[f]^{k}$ or $[h]=[g]^{k}$ for some $k\in \mathbb{Z}$.
% By regarding $[\{(X_{1},x_{1}),(X_{2},x_{2})\},\{(Y_{1},y_{1}),(Y_{2},y_{2})\}]$ as a subgroup
We may write $[h]=[f]^{k}$. It follows that $[f]^{k}=[f]\circ [g]$. So one has $[g]=[f]^{k-1}$, which is impossible.
\end{example}

Recall that the suspension $(SX,\ast)$ is an $H$-cogroup with the comultiplication $\mu':(SX,\ast)\vee (SX,\ast)\to (SX,\ast)$ given by
\begin{equation*}
      \mu'[t,x]=\left\{
        \begin{array}{ll}
        ([2t,x],\ast), & \hbox{$0\leq t\leq 1/2$;} \\
        (\ast,[2t-1,x]), & \hbox{$1/2\leq t\leq 1$.}
        \end{array}
            \right.
\end{equation*}
We define the comultiplication on $\{(SX_{i},\ast)\}_{1\leq i\leq n}$ as
\begin{equation*}
    \mu':\{(SX_{i},\ast)\}_{1\leq i\leq n}\to \{(SX_{i},\ast)\}_{1\leq i\leq n}\vee \{(SX_{i},\ast)\}_{1\leq i\leq n},
\end{equation*}
where $\mu'(\{[t,z_{i}]\}_{1\leq i\leq n})=\left\{
        \begin{array}{ll}
        \{([2t,z_{i}],\ast)\}_{1\leq i\leq n}, & \hbox{$0\leq t\leq 1/2$;} \\
        \{(\ast,[2t-1,z_{i}])\}_{1\leq i\leq n}, & \hbox{$1/2\leq t\leq 1$.}
        \end{array}
            \right.$

Let $\{(X_{i},x_{i})\}_{1\leq i\leq n}$ and $\{(Y_{i},y_{i})\}_{1\leq i\leq n}$ be based interaction spaces. For any morphisms $f,g:\{(SX_{i},\ast)\}_{1\leq i\leq n}\to \{(Y_{i},y_{i})\}_{1\leq i\leq n}$, we can define a morphism $f\cdot g$ as the composition
\begin{equation*}
\begin{split}
    \{SX_{i}\}_{1\leq i\leq n}&\stackrel{\mu'}{\longrightarrow} \{SX_{i}\}_{1\leq i\leq n}\vee \{SX_{i}\}_{1\leq i\leq n}\\
    &\stackrel{f\vee g}{\longrightarrow}  \{Y_{i}\}_{1\leq i\leq n}\vee \{Y_{i}\}_{1\leq i\leq n}\stackrel{\Delta'}{\longrightarrow} \{Y_{i}\}_{1\leq i\leq n}.
\end{split}
\end{equation*}
Here, $\Delta'$ is the map given by $\Delta'(\{a_{i}\}_{1\leq i\leq n},\{y_{i}\}_{1\leq i\leq n})=\Delta'(\{y_{i}\}_{1\leq i\leq n},\{a_{i}\}_{1\leq i\leq n})=\{a_{i}\}_{1\leq i\leq n}$ for $\{a_{i}\}_{1\leq i\leq n}\in \{Y_{i}\}_{1\leq i\leq n}$.

We can regard $[\{(SX_{i},\ast)\}_{1\leq i\leq n},\{(Y_{i},y_{i})\}_{1\leq i\leq n}]$ as a subset of $\prod\limits_{i=1}^{n}[(SX_{i},\ast),(Y_{i},y_{i})]$.
For any morphism $f=\{f_{i}\}_{1\leq i\leq n}:\{(SX_{i},\ast)\}_{1\leq i\leq n}\to \{(Y_{i},y_{i})\}_{1\leq i\leq n}$, the map $f^{\sharp}=\{f_{i}\circ \nu'_{i}\}_{1\leq i\leq n}$ is a morphism of interaction spaces. Here, $\nu'_{i}:SX_{i}\to SX_{i},[s,x]\mapsto [1-s,x]$. Let $\overline{[\{(SX_{i},\ast)\}_{1\leq i\leq n},\{(Y_{i},y_{i})\}_{1\leq i\leq n}]}$ be the subgroup of $\prod\limits_{i=1}^{n}[(SX_{i},\ast),(Y_{i},y_{i})]$ generated by the set $[\{(SX_{i},\ast)\}_{1\leq i\leq n},\{(Y_{i},y_{i})\}_{1\leq i\leq n}]$. Then we have the following result.

\begin{proposition}\label{proposition:injgroup}
Let $\{(X_{i},x_{i})\}_{1\leq i\leq n}$ and $\{(Y_{i},y_{i})\}_{1\leq i\leq n}$ be based interaction spaces. Suppose that $X_{i}$ is Hausdorff for each $1\leq i\leq n$. Then the natural injection $A$ can extent to a monomorphism of groups
    \begin{equation*}
        \overline{A}:\overline{[\{(SX_{i},\ast)\}_{1\leq i\leq n},\{(Y_{i},y_{i})\}_{1\leq i\leq n}]}\to [\{(X_{i},x_{i})\}_{1\leq i\leq n},\{(\Omega Y_{i},\omega_{i})\}_{1\leq i\leq n}].
    \end{equation*}
\end{proposition}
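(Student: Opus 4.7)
My plan is to realize $\overline{A}$ as the restriction of the coordinatewise product map $\Phi = \prod_{i=1}^{n} A_i$ built from the classical suspension-loop adjunctions, and to use that each $A_i$ is already a group isomorphism.

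First, I would invoke the classical fact that for every $i$ the natural equivalence $A_i : [(SX_i, \ast), (Y_i, y_i)] \to [(X_i, x_i), (\Omega Y_i, \omega_i)]$ is an \emph{isomorphism of groups}, where the source carries the group structure induced by the $H$-cogroup $SX_i$ and the target carries the group structure induced by the $H$-group $\Omega Y_i$. Taking the product over $i$ yields a group isomorphism
$$\Phi : \prod_{i=1}^{n} [(SX_i, \ast), (Y_i, y_i)] \;\longrightarrow\; \prod_{i=1}^{n} [(X_i, x_i), (\Omega Y_i, \omega_i)].$$
By inspecting the formula $\hat{f}_i(x)(s) = f_i[s,x]$, the map $A$ of the previous proposition is precisely the restriction of $\Phi$ to the subset $[\{(SX_{i},\ast)\}_{1\leq i\leq n},\{(Y_{i},y_{i})\}_{1\leq i\leq n}]$.

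Second, I would observe that by Proposition \ref{proposition:group1} the multiplication on $[\{(X_{i},x_{i})\}_{1\leq i\leq n},\{(\Omega Y_{i},\omega_{i})\}_{1\leq i\leq n}]$ is exactly the coordinatewise loop multiplication $\mu$, so this set sits as a subgroup of the ambient product group $\prod_{i=1}^{n} [(X_i, x_i), (\Omega Y_i, \omega_i)]$. The previous proposition shows that $\Phi$ already sends the generating set $[\{(SX_{i},\ast)\}_{1\leq i\leq n},\{(Y_{i},y_{i})\}_{1\leq i\leq n}]$ into this subgroup, and hence it automatically sends the subgroup $\overline{[\{(SX_{i},\ast)\}_{1\leq i\leq n},\{(Y_{i},y_{i})\}_{1\leq i\leq n}]}$ it generates into it as well. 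Defining $\overline{A}$ as the restriction of $\Phi$ with codomain $[\{(X_{i},x_{i})\}_{1\leq i\leq n},\{(\Omega Y_{i},\omega_{i})\}_{1\leq i\leq n}]$ then gives a well-defined group homomorphism extending $A$; it is injective because $\Phi$ is.

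The main obstacle I expect is a bookkeeping issue rather than a deep one: one has to be careful that the two group structures at play match up, given that the set $[\{(SX_{i},\ast)\}_{1\leq i\leq n},\{(Y_{i},y_{i})\}_{1\leq i\leq n}]$ is generally not closed under the suspension comultiplication $\mu'$ (cf.\ Example \ref{example:product}), so its ``product'' only makes sense after passing to the ambient product group. Once one has carefully verified that the group operation inherited by $\overline{[\{(SX_{i},\ast)\}_{1\leq i\leq n},\{(Y_{i},y_{i})\}_{1\leq i\leq n}]}$ from the product group corresponds under $\Phi$ to the loop-multiplication operation on the target, the asymmetry between source (a set embedded in a product group) and target (already a group by Proposition \ref{proposition:group1}) dissolves and everything reduces to the classical $S \dashv \Omega$ adjunction applied componentwise.
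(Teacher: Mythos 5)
Your proposal is correct and takes essentially the same route as the paper's proof: both realize $\overline{A}$ as the restriction of the componentwise adjunction isomorphism $\prod_{i} A_{i}$ (cited in the paper via Switzer's Proposition 2.23), use Proposition \ref{proposition:group1} to see the target as a subgroup of the ambient product group, and conclude that the subgroup generated by the image of the generating set lands there, so the restriction is a monomorphism. Your extra remark about the source not being closed under the comultiplication is a fair point of care, but it does not change the argument.
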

\begin{proof}
Consider the following commutative diagram.
\begin{equation*}
  \xymatrix{
  [\{(SX_{i},\ast)\}_{1\leq i\leq n},\{(Y_{i},y_{i})\}_{1\leq i\leq n}]\ar@{^{(}->}[r]^{A}\ar@{^{(}->}[d]&[\{(X_{i},x_{i})\}_{1\leq i\leq n},\{(\Omega Y_{i},\omega_{i})\}_{1\leq i\leq n}]\ar@{^{(}->}[d]\\
  \prod\limits_{i=1}^{n}[(SX_{i},\ast),(Y_{i},y_{i})]\ar@{->}[r]^{\prod_{i} A_{i}}_{\cong}&\prod\limits_{i=1}^{n}[(X_{i},x_{i}),(\Omega Y_{i},\omega_{i})]
  }
\end{equation*}
By \cite[Proposition 2.23]{switzer2017algebraic}, the map $\prod_{i} A_{i}$ is an isomorphism of groups. The desired result follows from Proposition \ref{proposition:group1}.
\end{proof}

\subsection{Homotopy group}\label{subsection:homotopy_group}

Now, we will introduce the homotopy group of the interaction space. Let $\{(X_{i},x_{i})\}_{1\leq i\leq n}$ be an interaction space.
For a family of disjoint $k$-spheres $S^{k_{1}},\dots,S^{k_{n}}$, we obtain a family of based spheres $(S^{k_{1}},x_{1}),\dots,(S^{k_{n}},x_{n})$ by gluing $x_{i}$ to the based point of $S^{k_{i}}$. Then we have the interaction sphere $\{(S^{k_{i}},x_{i})\}_{1\leq i\leq n}$.
\begin{definition}
The homotopy group of the interaction space $\{(X_{i},x_{i})\}_{1\leq i\leq n}$ is defined by
\begin{equation*}
  \pi_{k_{1},\dots,k_{n}}(\{(X_{i},x_{i})\}_{1\leq i\leq n})=\overline{[\{(S^{k_{i}},x_{i})\}_{1\leq i\leq n},\{(X_{i},x_{i})\}_{1\leq i\leq n}]}.
\end{equation*}
\end{definition}
If $k_{1}=k_{2}=\cdots=k_{n}=k$, we denote $\pi_{k}(\{(X_{i},x_{i})\}_{1\leq i\leq n})=\pi_{k_{1},\dots,k_{n}}(\{(X_{i},x_{i})\}_{1\leq i\leq n})$. Regarding $\{x_{i}\}_{1\leq i\leq n}$ as an interaction space, by Proposition \ref{proposition:group1}, the homotopy class $\pi_{0}(\Omega^{k}\{(X_{i},x_{i})\}_{1\leq i\leq n})=[\{(S^{0},x_{i})\}_{1\leq i\leq n},\Omega^{k}\{(X_{i},x_{i})\}_{1\leq i\leq n}]$ is a group.
Proposition \ref{proposition:injgroup} shows that $\pi_{k}(\{(X_{i},x_{i})\}_{1\leq i\leq n})$ is a subgroup of $\pi_{0}(\Omega^{k}\{(X_{i},x_{i})\}_{1\leq i\leq n})$. Here, $(S^{0},x_{i}),i=1,\dots,n$ are disjoint $0$-spheres glued to the corresponding based point $x_{i}$.

\begin{example}
Example \ref{example:product} continued. By definition, we have
\begin{equation*}
  \pi_{1}(\{(Y_{1},y_{1}),(Y_{2},y_{2})\})=\overline{[\{(X_{1},x_{1}),(X_{2},x_{2})\},\{(Y_{1},y_{1}),(Y_{2},y_{2})\}]}.
\end{equation*}
Recall that each element in $[\{(X_{1},x_{1}),(X_{2},x_{2})\},\{(Y_{1},y_{1}),(Y_{2},y_{2})\}]$ can be written of the form $[f]^{k}$ or $[g]^{k}$ for some $k\in \mathbb{Z}$. Hence, the group $\pi_{1}(\{(Y_{1},y_{1}),(Y_{2},y_{2})\})$ is generated by $[f]$ and $[g]$. Note that $\pi_{1}(\{(Y_{1},y_{1}),(Y_{2},y_{2})\})$ can be regarded as a subgroup of $\pi_{1}(Y_{1},y_{1})\times \pi_{1}(Y_{2},y_{2})\cong \mathbb{Z}\oplus \mathbb{Z}$. It follows that $\pi_{1}(\{(Y_{1},y_{1}),(Y_{2},y_{2})\})\cong \mathbb{Z}\oplus \mathbb{Z}$. Moreover, one can obtain
\begin{equation*}
  \pi_{1}(\{(Y_{1},y_{1}),(Y_{2},y_{2})\})\cong \pi_{0}(\Omega\{(Y_{1},y_{1}),(Y_{2},y_{2})\}).
\end{equation*}
In this example, the corresponding map $\overline{A}$ is   an isomorphism of groups.
\end{example}

An interaction space $\{(X_{i},x_{i})\}_{1\leq i\leq n}$ is \emph{self-interactive} if all the space $(X_{i},x_{i}),i=1,\dots,n$ are coinciding.
Let $\{(Y,y_{0}),(Y,y_{0})\}$ be a self-interaction space.  Let $\omega_{0}:S^{1}\to y_{0}$ be the based point of $\Omega Y$. The configuration space of $\Omega Y-\omega_{0}$ is given by
\begin{equation*}
  F(\Omega Y-\omega_{0},2)=\{(\omega_{1},\omega_{2})\in \Omega Y\times \Omega Y|~\omega_{0},\omega_{1},\omega_{2}\text{ are all distinct from each other}\}.
\end{equation*}
The following theorem shows the connection between the interaction homotopy group and the usual homotopy group.
\begin{theorem}
$\pi_{0}(\Omega\{(Y,y_{0}),(Y,y_{0})\})\cong\pi_{0}(F(\Omega Y-\omega_{0},2))$.
\end{theorem}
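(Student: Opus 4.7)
The plan is to unfold both sides of the claimed isomorphism into concrete descriptions and produce a bijection directly. By the definition of $\pi_0$ for a loop interaction space,
\[
  \pi_0\bigl(\Omega\{(Y,y_0),(Y,y_0)\}\bigr) \;=\; \bigl[\{(S^0,x_0),(S^0,x_0)\},\{(\Omega Y,\omega_0),(\Omega Y,\omega_0)\}\bigr],
\]
so the first step is to describe the set of morphisms on the right-hand side. Each such morphism is a pair $(f_1,f_2)$ of basepoint-preserving maps $f_i:S^0\to\Omega Y$. Writing $a_i$ for the non-basepoint of the $i$-th copy of $S^0$ and setting $\omega_i := f_i(a_i)$, each morphism is fully determined by the pair $(\omega_1,\omega_2)\in\Omega Y\times\Omega Y$, since $S^0$ is discrete and the shared basepoint is sent to $\omega_0$ by hypothesis.

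Next I would apply the interaction condition $(\ast)$. Checking $(\ast)$ on the pairs $(a_i,x_0)$ forces $\omega_i\neq\omega_0$, and on $(a_1,a_2)$ forces $\omega_1\neq\omega_2$; conversely, these inequalities are enough to guarantee $(\ast)$ everywhere (the only remaining nontrivial pair is $(x_0,x_0)$, where both maps already agree). Hence the set of morphisms is in natural bijection with the ordered configuration space $F(\Omega Y-\omega_0,2)$.

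The third step is to descend the bijection to homotopy classes. A formal homotopy $H=(H_1,H_2)$ unpacks, via the exponential law, to a continuous one-parameter family $t\mapsto(\omega_1(t),\omega_2(t))$; condition $(\ast)$ applied pointwise in $t$ keeps this family inside $F(\Omega Y-\omega_0,2)$. Conversely, every continuous path in the configuration space reassembles into a formal homotopy of interaction morphisms. Since the relation $\sim$ is by definition the transitive closure of $\simeq$, and since path-connectedness in any topological space is already transitive (paths concatenate to paths), the two relations induce the same partition of the morphism set, and the bijection descends to an identification with $\pi_0(F(\Omega Y-\omega_0,2))$.

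The principal subtlety, and what I expect to be the main obstacle, is pinning down the convention for the interaction $0$-sphere: one must read ``disjoint $0$-spheres glued to the corresponding based point $x_i$'' as producing a common basepoint $x_0$ shared by the two copies, so that a morphism is free to send both basepoints to $\omega_0$ without violating $(\ast)$. Once this convention is fixed, every step above is a direct unpacking of definitions; no deeper homotopy-theoretic input beyond continuity and the exponential law is required.
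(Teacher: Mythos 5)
Your identification of the morphism set with $F(\Omega Y-\omega_{0},2)$ agrees with the paper, but the descent to homotopy classes contains a genuine gap: you misread what condition $(\ast)$ demands of a formal homotopy. The domain of a formal homotopy is the interaction space $(X\times I,\{X_{i}\times I\})$, so $(\ast)$ quantifies over \emph{all} pairs of points $(a_{1},t)\in X_{1}\times I$ and $(a_{2},t')\in X_{2}\times I$, with $t$ and $t'$ independent --- not ``pointwise in $t$''. Since $a_{1}\neq a_{2}$, a formal homotopy amounts to a pair of paths $\gamma_{1},\gamma_{2}:I\to\Omega Y-\omega_{0}$ with $\gamma_{1}(t)\neq\gamma_{2}(t')$ for all $t,t'$, i.e.\ with disjoint images; this is strictly stronger than being a path in the configuration space, which only forbids $\gamma_{1}(t)=\gamma_{2}(t)$ at equal times. (The paper uses exactly this unequal-time reading in its earlier examples, e.g.\ when it derives a contradiction from $H_{1}(x_{1},1)=H_{2}(x_{2},0)$.) Consequently your converse claim --- that every continuous path in the configuration space reassembles into a formal homotopy --- is false: a configuration-space path with $\gamma_{1}(0)=\gamma_{2}(1)$ already violates $(\ast)$.

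Because of this, the equivalence relation generated by formal homotopies is a priori \emph{finer} than path-connectivity in $F(\Omega Y-\omega_{0},2)$, and the entire substance of the theorem is the nontrivial containment: any two configurations in the same path component can be joined by a finite chain of path-pairs each having disjoint images. The paper proves this by reparametrizing loops to control their ``measure of constancy'' $m(\omega)$ (the total length of the maximal intervals on which $\omega$ is constant): it replaces $\omega_{2}$ by a loop $\omega_{3}$ with $m\geq 1/2$ and $\omega_{1}$ by $\phi_{1/3,r_{0}}(\omega_{1})$ with $m\leq 1/3$, so that the transported paths $\tilde{\gamma}_{1},\tilde{\gamma}_{2}$ are automatically distinct at every pair of times. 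None of this appears in your argument; your final step reduces the theorem to a tautology only because of the incorrect pointwise reading of $(\ast)$.
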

\begin{proof}
By a straightforward calculation, we observe that $[\{(S^{0},x_{0}),(S^{0},x_{0})\},\Omega\{(Y,y_{0}),(Y,y_{0})\}]=F(\Omega Y-\omega_{0},2)/\sim$, where $(\omega_{1},\omega_{2})\sim(\omega_{1}',\omega_{2}')$ if there are a sequence of paths $(\gamma_{1}^{1},\gamma_{2}^{1}),\dots,(\gamma_{1}^{k},\gamma_{2}^{k}):[0,1]\to F(\Omega Y-\omega_{0},2)$ connecting $(\omega_{1},\omega_{2})$ and $(\omega_{1}',\omega_{2}')$ in such a way that $\gamma_{1}^{j}$ and $\gamma_{2}^{j}$ are disjoint for each $j$. To show the desired result, we only need to prove that: if $(\omega_{1},\omega_{2})$ and $(\omega_{1}',\omega_{2}')$ are in the same component of $F(\Omega Y-\omega_{0},2)$, then $(\omega_{1},\omega_{2})\sim(\omega_{1}',\omega_{2}')$.

%Case $(i)$. When $\omega_{1}\neq \omega_{2}'$ and $\omega_{1}'\neq \omega_{2}$.

Suppose $(\omega_{1},\omega_{2})$ and $(\omega_{1}',\omega_{2}')$ are in the same component of $F(\Omega Y-\omega_{0},2)$. Then there is a path $(\gamma_{1},\gamma_{2})$ is a path from $(\omega_{1},\omega_{2})$ to $(\omega_{1}',\omega_{2}')$. We will show that $(\omega_{1},\omega_{2})$ and $(\omega_{1}',\omega_{2}')$ are connected by a sequence of paths with the disjoint property.
For each $y\in \omega(S^{1})$, the set $\omega^{-1}(y)$ is closed. Let $\mathcal{I}(y)=\{[a,b]\subseteq \omega^{-1}(y)|a<b \}$. Note that $\mathcal{I}(y)$ can be an empty set. Let $\tilde{\mathcal{I}}(y)$ be the subset of $\mathcal{I}(y)$ consisting of the maximal intervals in $\mathcal{I}(y)$. Then every two distinct intervals $I_{1},I_{2}$ in $\tilde{\mathcal{I}}(y)$ are disjoint; otherwise, we have $I_{1}\subseteq I_{1}\cup I_{2}$, which contradicts to the maximality of $I_{1}$. Let $\mathcal{I}(\omega)=\bigcup\limits_{y\in Y}\tilde{\mathcal{I}}(y)$. Since every two distinct intervals in $\mathcal{I}(\omega)$ are disjoint, we have
\begin{equation*}
  m(\mathcal{I}(\omega))=\sum\limits_{J\in \mathcal{I}(\omega)}m(J)\leq 1,
\end{equation*}
where $m(-)$ denotes the measure function. This shows that the set $\mathcal{I}(\omega)$ is countable. We denote the intervals in $\mathcal{I}(\omega)$ as $\dots,[a_{j},b_{j}],\dots$ such that $b_{j}<a_{j+1}$ for any $j\in \mathbb{Z}$. Here, we omit the discussion of the cases where $j$ has an upper/lower bound, as the process is essentially similar.
For any positive real numbers $q+r\leq 1$, we define the map $\phi_{q,r}:\Omega Y\to \Omega Y$ by
\begin{equation*}
  \phi_{q,r}(\omega)(s)=\left\{
                    \begin{array}{ll}
                      \omega(a_{j}), & \hbox{$\lambda_{j}\leq s\leq \mu_{j}$;} \\
                      \omega(\mu_{j}+\frac{a_{j+1}-b_{j}}{\lambda_{j+1}-\mu_{j}}(s-\mu_{j})), & \hbox{$\mu_{j}\leq s\leq \lambda_{j+1}$;}\\
                      \omega(s), & \hbox{otherwise}
                    \end{array}
                  \right.
\end{equation*}
Here, $j\in \mathbb{Z}$ and $\lambda_{j}=r_{j}+q(b_{j}-a_{j})$ and $\mu_{j}=a_{j}-(q+r)(b_{j}-a_{j})$. Then one can obtain $m(\phi_{q}(\omega))=qm(\omega)$.

Since $(\omega_{1},\omega_{2})\in F(\Omega Y-\omega_{0},2)$, there exists some $s_{2}\in S^{1}$ such that $\omega_{2}(s_{2})\neq \omega_{1}(s_{2})$. Consider the loop $\omega_{3}\in \Omega Y-\omega_{0}$ given by
\begin{equation*}
  \omega_{3}(s)=\left\{
                  \begin{array}{ll}
                    \omega_{2}(2s), & \hbox{$0\leq s\leq s_{2}/2 $;} \\
                    \omega_{2}(s_{2}), & \hbox{$s_{2}/2\leq s\leq (1+s_{2})/2$;}\\
                    \omega_{2}(2s-1), & \hbox{$(1+s_{2})/2\leq s\leq 1$.}
                  \end{array}
                \right.
\end{equation*}
We set $\alpha_{1}(t)=\omega_{1},0\leq t\leq 1$ and
\begin{equation*}
  \alpha_{2}(t)=\left\{
                  \begin{array}{ll}
                    \omega_{2}(2s), & \hbox{$0\leq s\leq s_{2}(2-t)/2 $;} \\
                    \omega_{2}(s_{2}), & \hbox{$s_{2}(2-t)/2\leq s\leq s_{2}(2-t)/2+t/2$;}\\
                    \omega_{2}(2s-1), & \hbox{$s_{2}(2-t)/2+t/2\leq s\leq 1$.}
                  \end{array}
                \right.
\end{equation*}
Note that the path $\alpha_{1}$ and $\alpha_{2}$ are disjoint.
It follows that $(\alpha_{1},\alpha_{2})$ is a path from $(\omega_{1},\omega_{2})$ to $(\omega_{1},\omega_{3})$ on $F(\Omega Y-\omega_{0},2)$. Thus we have $(\omega_{1},\omega_{2})\sim (\omega_{1},\omega_{3})$.
Similarly, there exists an element $s_{2}'\in S^{1}$ such that $\omega_{2}'(s_{2}')\neq \omega_{1}'(s_{2}')$. And we set
\begin{equation*}
  \omega_{3}'(s)=\left\{
                  \begin{array}{ll}
                    \omega_{2}'(2s), & \hbox{$0\leq s\leq s_{2}'/2 $;} \\
                    \omega_{2}'(s_{2}'), & \hbox{$s_{2}'/2\leq s\leq (1+s_{2}')/2$;}\\
                    \omega_{2}'(2s-1), & \hbox{$(1+s_{2}')/2\leq s\leq 1$.}
                  \end{array}
                \right.
\end{equation*}
Then we have $(\omega_{1}',\omega_{2}')\sim (\omega_{1}',\omega_{3}')$. On the other hand, consider the path $\tilde{\gamma}_{2}:I\to \Omega Y-\omega_{0}$ from $\omega_{3}$ to $\omega_{3}'$ given by
\begin{equation*}
  \tilde{\gamma}_{2}(t)(s)=\left\{
                  \begin{array}{ll}
                    \gamma_{2}(t)(2s), & \hbox{$0\leq s\leq ((1-t)s_{2}+ts_{2}')/2 $;} \\
                    \gamma_{2}(t)((1-t)s_{2}+ts_{2}), & \hbox{$((1-t)s_{2}+ts_{2}')/2\leq s\leq (1+(1-t)s_{2}+ts_{2}')/2$;}\\
                    \gamma_{2}(t)(2s-1), & \hbox{$(1+(1-t)s_{2}+ts_{2}')/2\leq s\leq 1$.}
                  \end{array}
                \right.
\end{equation*}
It follows that $m(\tilde{\gamma}_{2}(t))\geq 1/2$.

Now, let $\beta_{1}(t)=\phi_{(1-2t/3),tr)}(\omega_{1})$. Then $\beta_{1}(t)$ is a path from $\omega_{1}$ to $\phi_{1/3,r}(\omega_{1})$. Similarly, we have that $\beta_{1}'(t)=\phi_{(1-2t/3),tr)}(\omega_{1})$ is a path from $\omega_{1}'$ to $\phi_{1/3,r}(\omega_{1}')$. We can choose suitable $r_{0}\in(0,1/3]$ such that $\phi_{1/3,r_{0}}(\omega_{1})\neq \omega_{3}$ and $\phi_{1/3,r_{0}}(\omega_{1}')\neq \omega_{3}'$. Let $\beta_{2}(t)=\omega_{3}$. Then $(\beta_{1},\beta_{2})$ is a path from $(\omega_{1},\omega_{3})$ to $(\phi_{1/3,r}(\omega_{1}),\omega_{3})$ such that $\beta_{1},\beta_{2}$ are disjoint. So we have $(\omega_{1},\omega_{3})\sim(\phi_{1/3,r_{0}}(\omega_{1}),\omega_{3})$. Similarly, one has $(\omega_{1}',\omega_{3}')\sim(\phi_{1/3,r_{0}}(\omega_{1}'),\omega_{3}')$.
\begin{equation*}
  \xymatrix{
  (\omega_{1},\omega_{2})\ar@{.>}[rr]\ar@{->}[d]_{(\alpha_{1},\alpha_{2})}&&(\omega_{1}',\omega_{2}')\ar@{->}[d]^{(\alpha_{1}',\alpha_{2}')}\\
   (\omega_{1},\omega_{3})\ar@{->}[d]_{(\beta_{1},\beta_{2})}&&(\omega_{1}',\omega_{3}')\ar@{->}[d]^{(\beta_{1}',\beta_{2}')}\\
  (\phi_{1/3,r_{0}}(\omega_{1}),\omega_{3})\ar@{->}[rr]^{(\tilde{\gamma}_{1},\tilde{\gamma}_{2})}&&(\phi_{1/3,r_{0}}(\omega_{1}'),\omega_{3}')
  }
\end{equation*}
Let $\tilde{\gamma}_{1}:I\to \Omega Y-\omega_{0}$ be a path given by $\tilde{\gamma}_{1}(t)=\phi_{1/3,r_{0}}(\gamma_{1}(t))$. Note that $\tilde{\gamma}_{1}(0)=\phi_{1/3,r_{0}}(\omega_{1})$ and $\tilde{\gamma}_{1}(1)=\phi_{1/3,r_{0}}(\omega_{1}')$. It can be verified that $\tilde{\gamma}_{1}$ is a path from $\phi_{1/3,r_{0}}(\omega_{1})$ to $\phi_{1/3,r_{0}}(\omega_{1}')$. Moreover, one has $m(\tilde{\gamma}_{1}(t))=\frac{1}{3}m(\tilde{\gamma}_{1}(t))\leq \frac{1}{3}$. Hence, for any $t_{1},t_{2}\in [0,1]$, we have $m(\tilde{\gamma}_{1}(t_{1}))<m(\tilde{\gamma}_{2}(t_{2}))$. This implies that the paths $\tilde{\gamma}_{1}$ and $\tilde{\gamma}_{2}$ are disjoint. Thus we have $(\omega_{1},\omega_{2})\sim (\omega_{1}',\omega_{2}')$.
\end{proof}

\begin{corollary}
$\pi_{1}(\{(Y,y_{0}),(Y,y_{0})\})\hookrightarrow \pi_{0}(F(\Omega Y-\omega_{0},2))$.
\end{corollary}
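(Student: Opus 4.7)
The plan is to chain two results already established in the paper: the monomorphism from Proposition \ref{proposition:injgroup} and the isomorphism from the immediately preceding theorem. First, I would rewrite the first interaction sphere $\{(S^{1},x_{1}),(S^{1},x_{2})\}$ appearing in the definition of $\pi_{1}$ as the suspension $\{(SS^{0},\ast),(SS^{0},\ast)\}$, using the standard identification $SS^{0}\cong S^{1}$ (compatible with base points). This presents
\begin{equation*}
    \pi_{1}(\{(Y,y_{0}),(Y,y_{0})\})=\overline{[\{(SS^{0},\ast),(SS^{0},\ast)\},\{(Y,y_{0}),(Y,y_{0})\}]}
\end{equation*}
in the exact form required to feed into Proposition \ref{proposition:injgroup}.

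Next, I would apply Proposition \ref{proposition:injgroup} with $X_{i}=S^{0}$ and $Y_{i}=Y$ (noting that $S^{0}$ is Hausdorff, so the hypothesis is satisfied) to obtain a monomorphism
\begin{equation*}
    \overline{A}:\pi_{1}(\{(Y,y_{0}),(Y,y_{0})\})\hookrightarrow[\{(S^{0},x_{1}),(S^{0},x_{2})\},\Omega\{(Y,y_{0}),(Y,y_{0})\}].
\end{equation*}
The codomain is by definition $\pi_{0}(\Omega\{(Y,y_{0}),(Y,y_{0})\})$, so we have the first half of the embedding.

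Finally, I would invoke the just-proved theorem, which supplies an isomorphism $\pi_{0}(\Omega\{(Y,y_{0}),(Y,y_{0})\})\cong\pi_{0}(F(\Omega Y-\omega_{0},2))$. Composing $\overline{A}$ with this isomorphism yields the desired injection
\begin{equation*}
    \pi_{1}(\{(Y,y_{0}),(Y,y_{0})\})\hookrightarrow\pi_{0}(F(\Omega Y-\omega_{0},2)).
\end{equation*}
There is no substantive obstacle here: the corollary is essentially a two-line assembly of prior results. The only minor point to check carefully is the base-point bookkeeping in the identification $SS^{0}\cong S^{1}$ so that the interaction sphere used for $\pi_{1}$ matches the suspension-form interaction space to which Proposition \ref{proposition:injgroup} applies; once that is noted, composition does the job.
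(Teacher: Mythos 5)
Your proposal is correct and follows exactly the route the paper intends: the corollary is the $k=1$ case of the remark after the definition of the interaction homotopy group (where Proposition \ref{proposition:injgroup} is used to embed $\pi_{k}$ into $\pi_{0}(\Omega^{k}\cdot)$), composed with the isomorphism of the preceding theorem. The base-point and Hausdorff checks you flag are the only details, and you handle them correctly.
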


\begin{example}
Consider $Y=S^{1}\vee S^{1}$ as the wedge sum of two circles, with their intersection $y_{0}$ as the base point. Let $\{(S^{1},x_{1}),(S^{1},x_{2})\}$ be the interaction space of circles intersecting at $x_{1}=x_{2}$. Then the interaction homotopy space $\pi_{1}(\{(Y,y_{0}),(Y,y_{0})\})=\overline{[\{(S^{1},x_{1}),(S^{1},x_{2})\},\{(Y,y_{0}),(Y,y_{0})\}]}$.
Let $h=\{h_{1},h_{2}\}:\{(S^{1},x_{1}),(S^{1},x_{2})\}\to \{(Y,y_{0}),(Y,y_{0})\}$ be a morphism of interaction spaces.
Note that $[h_{1}],[h_{2}]\in [S^{1},Y]$. If $h_{1}$ and $h_{2}$ are not null homotopic, it implies that they must be mapped onto at least one of the circles within $Y$. Therefore, the map $h$ can take one of the following forms: $(\theta_{1},c_{2}),(c_{1},\theta_{2}),(\theta_{1},\theta_{2})$.  Here, $c_{1}$ and $c_{2}$ are null homotopic maps, while $\theta_{1}$ and $\theta_{2}$ are maps that are homotopic to the identity map $\mathrm{id}:S^{1}\to S^{1}$. One can verify that $(\theta_{1},c_{2})\cdot(c_{1},\theta_{2})=(\theta_{1},\theta_{2})$. It follows that $\pi_{1}(\{(Y,y_{0}),(Y,y_{0})\})=\mathbb{Z}\oplus \mathbb{Z}$. Thus the group $\pi_{0}(F(\Omega Y-\omega_{0},2))$ must has the component direct sum $\mathbb{Z}\oplus \mathbb{Z}$.
\end{example}

\section{Interaction singular homology}\label{section:singular_homology}
Homology is a classical homotopy invariant, playing a fundamental role in the study of topological spaces. In this section, we   introduce the homology of interaction spaces, which we call the singular interaction homology. Moreover, we show that the interaction singular homology is an invariant with respect to interaction homotopy. From now on, the ground ring $R$ is assumed to a principal ideal domain. For $R$-modules $A,B$, we write the tensor product of $A$ and $B$ over $R$ by $A\otimes B=A\otimes_{R}B$ for convenience.

\subsection{Interaction singular chain complex}
Let $X$ be a topological space. Recall that a singular $p$-simplex on $X$ is a continuous map $\sigma:\Delta^{p}\to X$, where $\Delta^{p}=\{\sum\limits_{i=0}^{p}t_{i}e_{i}|0\leq t_{i}\leq 1,\sum t_{i}=1\}$ is spanned by the standard basis $e_{0},\dots,e_{p}$ of $\mathbb{R}^{p+1}$. We have a singular chain complex $S_{\ast}(X)$, whose homology is the singular homology of $X$ \cite{hatcher2002algebraic}.

Let $\{X_{i}\}_{1\leq i\leq n}$ be an interaction space. The \emph{interaction singular $(p_{1},\dots,p_{n})$-simplex} $(\sigma_{1},\dots,\sigma_{n})$ consists of a family of singular simplices $\sigma_{i}:\Delta^{p_{i}}\to X_{i},i=1,\dots,n$ satisfying $\bigcap\limits_{i=1}^{n} \im\sigma_{i}\neq\emptyset$. Note that we have a family of chain complexes $S_{\ast}(X_{i})$ for $i=1,\dots,n$. Then the tensor product $\bigotimes\limits_{i=1}^{n}S_{\ast}(X_{i})$ is also a chain complex with the differential given by
\begin{equation*}
  d(\sigma_{1}\otimes \sigma_{2}\otimes\cdots\otimes \sigma_{n})=\sum\limits_{i=1}^{n}(-1)^{\dim \sigma_{1}+\cdots+\dim \sigma_{i-1}}\sigma_{1}\otimes\cdots \otimes d\sigma_{i}\otimes\cdots\otimes \sigma_{n}.
\end{equation*}
Now, consider the sub $R$-module $T_{\ast}(\{X_{i}\}_{1\leq i\leq n})$ of $\bigotimes\limits_{i=1}^{n}S_{\ast}(X_{i})$ generated by the elements $\sigma_{1}\otimes \sigma_{2}\otimes\cdots\otimes \sigma_{n}$ satisfying $\bigcap\limits_{i=1}^{n} \im \sigma_{i}=\emptyset$. Note that $\bigcap\limits_{i=1}^{n} \im\sigma_{i}=\emptyset$ implies $\left(\bigcap\limits_{i\neq j} \sigma_{i}\right)\cap \im d_{r}\sigma_{j}=\emptyset$. Here, $d_{r}:S_{\dim\sigma_{j}}(X_{j})\to S_{\dim\sigma_{j}-1}(X_{j})$ is the face map for $0\leq r\leq \dim \sigma_{j}$. It follows that $T_{\ast}(\{X_{i}\}_{1\leq i\leq n})$ is a sub chain complex of $\bigotimes\limits_{i=1}^{n}S_{\ast}(X_{i})$. The \emph{interaction singular chain complex} of $\{X_{i}\}_{1\leq i\leq n}$ is defined by
\begin{equation*}
  IS_{\ast}(\{X_{i}\}_{1\leq i\leq n})=\left(\bigotimes\limits_{i=1}^{n}S_{\ast}(X_{i})\right)/T_{\ast}(\{X_{i}\}_{1\leq i\leq n}).
\end{equation*}
The \emph{interaction singular homology} of $\{X_{i}\}_{1\leq i\leq n}$ is defined by
\begin{equation*}
  H_{p}(\{X_{i}\}_{1\leq i\leq n})=H_{p}(IS_{\ast}(\{X_{i}\}_{1\leq i\leq n}),\quad p\geq 0.
\end{equation*}
The rank of $H_{p}(\{X_{i}\}_{1\leq i\leq n})$ is the \emph{interaction Betti number}, and we denote it by $\beta_{p}(\{X_{i}\}_{1\leq i\leq n})$.

Recall that the singular chain construction $S_{\ast}:\mathbf{Top}\to \mathbf{Chain}$ from the category of topological spaces to the category of chain complexes is functorial. Now, we will show the functorial property of the interaction chain construction. Let $f=\{f_{i}\}_{1\leq i\leq n}:\{X_{i}\}_{1\leq i\leq n}\to \{Y_{i}\}_{1\leq i\leq n}$ be a morphism of interaction spaces. Then one has a morphism of chain complexes $f^{\sharp}=\bigotimes\limits_{i=1}^{n}f_{i}^{\sharp}:\bigotimes\limits_{i=1}^{n}S_{\ast}(X_{i})\to \bigotimes\limits_{i=1}^{n}S_{\ast}(Y_{i})$. Here, $f_{i}^{\sharp}:S_{\ast}(X_{i})\to S_{\ast}(Y_{i})$ is the morphism of chain complexes induced by $f_{i}:X_{i}\to Y_{i}$. The following lemma indicates that the construction $T_{\ast}:\mathbf{IT_{n}}\to \mathbf{Chain}$ is functorial.
\begin{lemma}\label{lemma:subset}
    Let $f:\{X_{i}\}_{1\leq i\leq n}\to \{Y_{i}\}_{1\leq i\leq n}$ be a morphism of interaction spaces. Then we have $f^{\sharp}(T_{\ast}(\{X_{i}\}_{1\leq i\leq n}))\subseteq T_{\ast}(\{Y_{i}\}_{1\leq i\leq n})$.
\end{lemma}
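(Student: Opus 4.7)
The plan is to prove the inclusion on generators and argue by contrapositive. A generator of $T_{\ast}(\{X_{i}\}_{1\leq i\leq n})$ is a pure tensor $\sigma_{1}\otimes\cdots\otimes\sigma_{n}$ with $\sigma_{i}:\Delta^{p_{i}}\to X_{i}$ and $\bigcap_{i=1}^{n}\im\sigma_{i}=\emptyset$. Since $f^{\sharp}=\bigotimes_{i} f_{i}^{\sharp}$ is defined on pure tensors by
\begin{equation*}
f^{\sharp}(\sigma_{1}\otimes\cdots\otimes\sigma_{n})=(f_{1}\circ\sigma_{1})\otimes\cdots\otimes(f_{n}\circ\sigma_{n}),
\end{equation*}
it suffices to show $\bigcap_{i=1}^{n}\im(f_{i}\circ\sigma_{i})=\emptyset$.

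I would argue the contrapositive: suppose there exists $y\in\bigcap_{i=1}^{n}\im(f_{i}\circ\sigma_{i})$. Then for each $i$ we can pick a point $x_{i}\in\im\sigma_{i}\subseteq X_{i}$ with $f_{i}(x_{i})=y$. The equalities $f_{i}(x_{i})=y=f_{j}(x_{j})$ for all $i,j$ trigger the interaction condition (\ref{interaction_condition}) in the hypothesis on $f$, which forces $x_{i}=x_{j}$ as elements of $X$. Writing this common point as $x$, we have $x\in X_{i}$ and $x\in\im\sigma_{i}$ for every $i$, hence $x\in\bigcap_{i=1}^{n}\im\sigma_{i}$, contradicting the assumption that $\sigma_{1}\otimes\cdots\otimes\sigma_{n}$ lies in $T_{\ast}(\{X_{i}\}_{1\leq i\leq n})$.

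This shows the image generator lies in $T_{\ast}(\{Y_{i}\}_{1\leq i\leq n})$, and extending $R$-linearly gives the containment $f^{\sharp}(T_{\ast}(\{X_{i}\}_{1\leq i\leq n}))\subseteq T_{\ast}(\{Y_{i}\}_{1\leq i\leq n})$. I do not anticipate any real obstacle here: the entire argument is a single application of condition (\ref{interaction_condition}) at a common image point, and the only thing to check carefully is that the structure-preserving property of $f$ genuinely yields a single preimage point $x$ lying simultaneously in all $X_{i}$ (rather than just fiberwise matched points). Once that is noted, the lemma — and hence the functoriality of $T_{\ast}:\mathbf{IT_{n}}\to\mathbf{Chain}$ and, by passage to the quotient, of $IS_{\ast}$ — follows immediately.
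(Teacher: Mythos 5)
Your argument is correct and is essentially the paper's own proof: both reduce to a generator $\sigma_{1}\otimes\cdots\otimes\sigma_{n}$ with empty common image, assume a common point $y$ in $\bigcap_{i}\im(f_{i}\circ\sigma_{i})$, and invoke condition (\ref{interaction_condition}) to produce a single point in $\bigcap_{i}\im\sigma_{i}$, a contradiction. No differences worth noting.
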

\begin{proof}
    Let $f=\{f_{i}\}_{1\leq i\leq n}:\{X_{i}\}_{1\leq i\leq n}\to \{Y_{i}\}_{1\leq i\leq n}$. For any element $\sigma_{1}\otimes \sigma_{2}\otimes\cdots\otimes \sigma_{n}$ satisfying $\bigcap\limits_{i=1}^{n} \im \sigma_{i}=\emptyset$, we have $f^{\sharp}(\sigma_{1}\otimes \sigma_{2}\otimes\cdots\otimes \sigma_{n})=(f_{1}\circ\sigma_{1})\otimes (f_{2}\circ\sigma_{2})\otimes\cdots\otimes (f_{n}\circ\sigma_{n})$. Here, $\sigma_{i}$ is the singular simplex of $X_{i}$ for $i=1,\dots,n$. Suppose that $\bigcap\limits_{i=1}^{n}\im(f_{i}\circ \sigma_{i})\neq \emptyset$. Then there exists points $a_{1},\dots,a_{n}$ in the corresponding standard simplices such that $f_{1}(\sigma_{1}(a_{1}))=\cdots=f_{n}(\sigma_{n}(a_{n}))$. It follows that $\sigma_{1}(a_{1})=\cdots=\sigma_{n}(a_{n})$. This contradicts to the assumption $\bigcap\limits_{i=1}^{n} \im \sigma_{i}=\emptyset$. Thus we have $f^{\sharp}(\sigma_{1}\otimes \sigma_{2}\otimes\cdots\otimes \sigma_{n})\subseteq T_{\ast}(\{Y_{i}\}_{1\leq i\leq n})$.
    %考虑所有相交还是两两相交，需要理解映射的含义以及section的证明
\end{proof}
\begin{lemma}\label{lemma:commutative}
    The construction $IS_{\ast}:\mathbf{IT_{n}}\to \mathbf{Chain}$ from the category of interaction spaces to the category of chain complexes is functorial.
\end{lemma}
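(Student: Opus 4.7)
The plan is to define the functor $IS_\ast$ on morphisms, verify it is well-defined at the level of the quotient chain complex, and then check the two functor axioms (preservation of identities and composition). The heavy lifting has essentially been done by Lemma \ref{lemma:subset}, so the remaining task is mostly bookkeeping.

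First, I would set up the induced map on the numerator. Given a morphism $f=\{f_i\}_{1\le i\le n}:\{X_i\}_{1\le i\le n}\to\{Y_i\}_{1\le i\le n}$ of interaction spaces, each $f_i:X_i\to Y_i$ induces a chain map $f_i^{\sharp}:S_\ast(X_i)\to S_\ast(Y_i)$ by functoriality of the usual singular chain complex construction. Taking the tensor product over the principal ideal domain $R$ yields a chain map
\begin{equation*}
    f^{\sharp}=\bigotimes_{i=1}^{n}f_i^{\sharp}:\bigotimes_{i=1}^{n}S_\ast(X_i)\longrightarrow\bigotimes_{i=1}^{n}S_\ast(Y_i),
\end{equation*}
which commutes with the differential defined on the tensor product by the Leibniz-type formula given in the paper; this is standard.

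Next, I would descend $f^{\sharp}$ to the quotient. By Lemma \ref{lemma:subset}, $f^{\sharp}(T_\ast(\{X_i\}_{1\le i\le n}))\subseteq T_\ast(\{Y_i\}_{1\le i\le n})$, so $f^{\sharp}$ passes to a well-defined $R$-module map
\begin{equation*}
    IS_\ast(f):IS_\ast(\{X_i\}_{1\le i\le n})\longrightarrow IS_\ast(\{Y_i\}_{1\le i\le n}).
\end{equation*}
Since $f^{\sharp}$ is a chain map on the numerators and the subcomplexes $T_\ast$ are preserved, $IS_\ast(f)$ commutes with the induced differentials on the quotients, hence is a morphism of chain complexes.

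Finally, I would verify the two functor axioms. For the identity morphism $\mathrm{id}:\{X_i\}_{1\le i\le n}\to\{X_i\}_{1\le i\le n}$, each component $\mathrm{id}_{X_i}$ induces the identity on $S_\ast(X_i)$, so $\mathrm{id}^{\sharp}$ is the identity on $\bigotimes_i S_\ast(X_i)$ and hence on the quotient $IS_\ast(\{X_i\}_{1\le i\le n})$. For composition, given $f:\{X_i\}_{1\le i\le n}\to\{Y_i\}_{1\le i\le n}$ and $g:\{Y_i\}_{1\le i\le n}\to\{Z_i\}_{1\le i\le n}$, the equality $(g\circ f)_i^{\sharp}=g_i^{\sharp}\circ f_i^{\sharp}$ holds in each factor by functoriality of $S_\ast$, so $(g\circ f)^{\sharp}=g^{\sharp}\circ f^{\sharp}$ on the tensor product, and this descends to $IS_\ast(g\circ f)=IS_\ast(g)\circ IS_\ast(f)$ on the quotients.

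There is really no serious obstacle here: the only non-trivial ingredient is that the subcomplex $T_\ast$ is preserved by morphisms of interaction spaces, and that is precisely what Lemma \ref{lemma:subset} ensures, using condition (\ref{interaction_condition}) to rule out the creation of common image points. The rest is a straightforward transfer of functoriality from $\mathbf{Top}$ to $\mathbf{IT_n}$ via the tensor product and the quotient construction.
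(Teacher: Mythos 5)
Your proposal is correct and follows essentially the same route as the paper: both use Lemma \ref{lemma:subset} to show that $f^{\sharp}=\bigotimes_i f_i^{\sharp}$ preserves the subcomplex $T_\ast$ and hence descends to the quotient $IS_\ast$, with the functor axioms inherited from the functoriality of the usual singular chain construction $S_\ast$. You spell out the identity and composition checks a bit more explicitly than the paper, which simply presents the commutative diagram of short exact sequences and cites the functoriality of $S_\ast$, but the substance is identical.
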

\begin{proof}
    Let $f=\{f_{i}\}_{1\leq i\leq n}:\{X_{i}\}_{1\leq i\leq n}\to \{Y_{i}\}_{1\leq i\leq n}$ be a morphism of interaction spaces. Consider the following diagram.
     \begin{equation*}
         \xymatrix{
         0\ar@{->}[r]&T_{\ast}(\{X_{i}\}_{1\leq i\leq n})\ar@{^{(}->}[r]^{\iota}\ar@{->}[d]^{f^{\sharp}|_{T_{\ast}(\{X_{i}\}_{1\leq i\leq n})}} &\bigotimes\limits_{i=1}^{n}S_{\ast}(X_{i})\ar@{->}[r]^{p}\ar@{->}[d]^{f^{\sharp}} &IS_{\ast}(\{X_{i}\}_{1\leq i\leq n})\ar@{->}[r]\ar@{->}[d]^{\bar{f}} &0\\
         0\ar@{->}[r]&T_{\ast}(\{Y_{i}\}_{1\leq i\leq n})\ar@{^{(}->}[r]^{\iota} &\bigotimes\limits_{i=1}^{n}S_{\ast}(Y_{i})\ar@{->}[r]^{p}&IS_{\ast}(\{Y_{i}\}_{1\leq i\leq n})\ar@{->}[r]&0\\
         }
     \end{equation*}
    Here, $\iota$ is the inclusion, $p$ is the canonical projection, and $\bar{f}:IS_{\ast}(\{X_{i}\}_{1\leq i\leq n})\to IS_{\ast}(\{Y_{i}\}_{1\leq i\leq n})$ is given by $\bar{f}([z])=[f(z)], z\in \bigotimes\limits_{i=1}^{n}S_{\ast}(X_{i})$. By Lemma \ref{lemma:subset}, the above diagram is commutative. Thus we have $p\circ f^{\sharp}=\bar{f}\circ p$. By the functorial property of $S_{\ast}$, the construction $IS_{\ast}:\mathbf{IT_{n}}\to \mathbf{Chain}$ is a functor.
\end{proof}

\subsection{Homotopy invariant}
In the standard framework, homology is known as a homotopy invariant. Now, we will proceed to demonstrate that the interaction homology also exhibits the homotopy invariance.

\begin{theorem}\label{theorem:homotopy_invariant}
    Let $f,g: \{X_{i}\}_{1\leq i\leq n}\to \{Y_{i}\}_{1\leq i\leq n}$ be morphisms of interaction spaces. If $f$ and $g$ are homotopic, then we have $H_{\ast}(f)=H_{\ast}(g)$.
\end{theorem}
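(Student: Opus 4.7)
The plan is to mimic the classical prism-operator argument for homotopy invariance of singular homology, while being careful that everything descends to the quotient by $T_*$. First, since \emph{homotopic} means connected by a finite zigzag of formal homotopies, induction on the length of the zigzag reduces the problem to the case where $f$ and $g$ are formally homotopic, i.e.\ there exists an interaction morphism $H=\{H_i\}_{1\le i\le n}:\{X_i\times I\}\to\{Y_i\}$ with $H_i(-,0)=f_i$ and $H_i(-,1)=g_i$.

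For each $i$, the ordinary homotopy $H_i:X_i\times I\to Y_i$ gives a standard prism operator $D_i:S_*(X_i)\to S_{*+1}(Y_i)$ satisfying $\partial D_i+D_i\partial=g_i^{\sharp}-f_i^{\sharp}$, and crucially every singular simplex appearing in $D_i(\sigma_i)$ has image contained in $H_i(\mathrm{im}\,\sigma_i\times I)$. I would then assemble these into an operator on the tensor product,
\begin{equation*}
  D=\sum_{i=1}^{n}(-1)^{\epsilon_i}\,g_1^{\sharp}\otimes\cdots\otimes g_{i-1}^{\sharp}\otimes D_i\otimes f_{i+1}^{\sharp}\otimes\cdots\otimes f_n^{\sharp}\colon\bigotimes_{i=1}^{n}S_*(X_i)\longrightarrow\bigotimes_{i=1}^{n}S_{*+1}(Y_i),
\end{equation*}
with the usual Koszul signs $\epsilon_i$. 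A routine (telescoping) computation shows $\partial D+D\partial=g^{\sharp}-f^{\sharp}$ on $\bigotimes_i S_*(X_i)$, where $f^{\sharp}=\bigotimes_i f_i^{\sharp}$ and similarly for $g^{\sharp}$.

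The substantive step is to verify that $D$ carries $T_*(\{X_i\})$ into $T_*(\{Y_i\})$, so that it descends to a chain homotopy $\bar D:IS_*(\{X_i\})\to IS_{*+1}(\{Y_i\})$ between $\bar f$ and $\bar g$. Take a generator $\sigma_1\otimes\cdots\otimes\sigma_n$ with $\bigcap_j\mathrm{im}\,\sigma_j=\emptyset$, and fix the $i$-th summand. A simplex in this summand has images contained, factor-by-factor, in $g_j(\mathrm{im}\,\sigma_j)$ for $j<i$, in $H_i(\mathrm{im}\,\sigma_i\times I)$ for $j=i$, and in $f_j(\mathrm{im}\,\sigma_j)$ for $j>i$. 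Assume for contradiction that some common point $y$ lies in all of these. Writing $g_j=H_j(-,1)$ for $j<i$ and $f_j=H_j(-,0)$ for $j>i$, we obtain preimages $(a_j,1)$, $(b,t)$, $(a_k,0)$ in $X_j\times I$, $X_i\times I$, $X_k\times I$ that all map to $y$ under the respective $H_\ell$'s. The interaction condition (\ref{interaction_condition}) applied to the morphism $H$ forces these preimages to be equal. If $1<i<n$ we get $1=t=0$, an immediate contradiction; if $i=1$ (resp.\ $i=n$) the equalities force $b=a_j$ for every $j\neq i$, putting $b$ in the forbidden intersection $\bigcap_j\mathrm{im}\,\sigma_j$. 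Either way we contradict our standing hypothesis, so the $i$-th summand lies in $T_{*+1}(\{Y_i\})$, and hence so does $D(\sigma_1\otimes\cdots\otimes\sigma_n)$.

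Once $\bar D$ is defined on $IS_*$, the identity $\partial\bar D+\bar D\partial=\bar g-\bar f$ is inherited from the unquotiented identity via the functoriality established in Lemma \ref{lemma:commutative}, so $\bar f$ and $\bar g$ are chain homotopic and $H_*(f)=H_*(g)$. The only delicate point in the whole argument is the compatibility of $D$ with the subcomplex $T_*$; everything else is a mechanical transcription of the tensor-product prism construction, and the interaction condition on the homotopy $H$ is exactly what makes this compatibility hold.
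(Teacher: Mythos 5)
Your proof is correct and takes essentially the same route as the paper's: reduce to a single formal homotopy, form the telescoping tensor-product prism operator, verify via the interaction condition that it carries $T_{\ast}(\{X_{i}\}_{1\leq i\leq n})$ into $T_{\ast}(\{Y_{i}\}_{1\leq i\leq n})$, and descend to the quotient $IS_{\ast}$. Your closing step (every map in the homotopy identity preserves $T_{\ast}$, so the identity passes directly to the quotient) is in fact tidier than the paper's detour through the retraction $r$ and the section $s$.
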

\begin{proof}
    Since $f\sim g: \{X_{i}\}_{1\leq i\leq n}\to \{Y_{i}\}_{1\leq i\leq n}$, there are homotopies $F_{i}:X_{i}\times I\to Y_{i}$ from $f_{i}$ to $g_{i}$ for $1\leq i\leq n$. By the proof of \cite[Theorem 2.10]{hatcher2002algebraic}, there is a chain homotopy $g_{i}^{\sharp}-f_{i}^{\sharp}=d \circ P_{i}+P_{i}\circ d$ from $f^{\sharp}:S_{\ast}(X_{i})\to S_{\ast}(Y_{i})$ to $g^{\sharp}:S_{\ast}(X_{i})\to S_{\ast}(Y_{i})$ for each $i$. Here, $P_{i}:S_{\ast}(X_{i})\to S_{\ast+1}(Y_{i})$ is the prim operator given by
    \begin{equation*}
        P_{i}(\sigma)=\sum\limits_{j}(-1)^{j}F_{i}\circ (\sigma\times \mathrm{id})[v_{0},\dots,v_{j},w_{j},\dots,w_{p}]
    \end{equation*}
    for $\sigma\in S_{p}(X_{i})$, where $[v_{0},\dots,v_{p}]=\Delta^{p}\times \{0\}$ and $[w_{0},\dots,w_{p}]=\Delta^{p}\times \{1\}$ are bottoms of the prim $\Delta^{p}\times I$.
    The notation $d$ refers to the differential on the corresponding chain complex if there is no ambiguity.

    Now, we will  prove the chain homotopy
    \begin{equation*}
        \bigotimes\limits_{i=1}^{n}f_{i}^{\sharp}\simeq \bigotimes\limits_{i=1}^{n}g_{i}^{\sharp}:\bigotimes\limits_{i=1}^{n}S_{\ast}(X_{i})\to \bigotimes\limits_{i=1}^{n}S_{\ast}(Y_{i}).
    \end{equation*}
    Note that $\bigotimes\limits_{i=1}^{n}f_{i}^{\sharp}=\tilde{f}_{1}\circ\cdots \circ\tilde{f}_{n}$, where $\tilde{f}_{i}=\mathrm{id}\otimes\cdots\otimes \mathrm{id}\otimes f_{i}^{\sharp}\otimes \mathrm{id}\otimes\cdots\otimes\mathrm{id}$ is a morphism of chain complexes from $S_{\ast}(X_{1})\otimes \cdots \otimes S_{\ast}(X_{i})\otimes S_{\ast}(Y_{i+1})\otimes \cdots \otimes S_{\ast}(Y_{n})$ to $S_{\ast}(X_{1})\otimes \cdots \otimes S_{\ast}(X_{i-1})\otimes S_{\ast}(Y_{i})\otimes \cdots \otimes S_{\ast}(Y_{n})$. Similarly, we set $\bigotimes\limits_{i=1}^{n}g_{i}^{\sharp}=\tilde{g}_{1}\circ\cdots \circ\tilde{g}_{n}$, where $\tilde{g}_{i}=\mathrm{id}\otimes\cdots\otimes \mathrm{id}\otimes g_{i}^{\sharp}\otimes \mathrm{id}\otimes\cdots\otimes\mathrm{id}$.
    Since the composition of chain homotopies is a chain homotopy, we only need to prove the chain homotopy
    \begin{equation*}
      \tilde{f}_{i}\simeq \tilde{g}_{i}.
    \end{equation*}
    By a straightforward calculation, we have
    \begin{equation*}
    \begin{split}
         &\mathrm{id}\otimes\cdots\otimes \mathrm{id}\otimes g_{i}^{\sharp}\otimes \mathrm{id}\otimes\cdots\otimes\mathrm{id}-\mathrm{id}\otimes\cdots\otimes \mathrm{id}\otimes f_{i}^{\sharp}\otimes \mathrm{id}\otimes\cdots\otimes\mathrm{id}\\
%         =&\mathrm{id}\otimes\cdots\otimes \mathrm{id}\otimes (dP_{i}+P_{i}d)\otimes \mathrm{id}\otimes\cdots\otimes\mathrm{id}\\
%         =&\mathrm{id}\otimes\cdots\otimes \mathrm{id}\otimes dP_{i}\otimes \mathrm{id}\otimes\cdots\otimes\mathrm{id}+\mathrm{id}\otimes\cdots\otimes \mathrm{id}\otimes P_{i}d\otimes \mathrm{id}\otimes\cdots\otimes\mathrm{id}\\
         =&d(\mathrm{id}\otimes\cdots\otimes \mathrm{id}\otimes P_{i}\otimes \mathrm{id}\otimes\cdots\otimes\mathrm{id})+(\mathrm{id}\otimes\cdots\otimes \mathrm{id}\otimes P_{i}\otimes \mathrm{id}\otimes\cdots\otimes\mathrm{id})d.
    \end{split}
    \end{equation*}
    Thus we obtain the desired homotopy. Let $h_{i}=\mathrm{id}\otimes\cdots\otimes \mathrm{id}\otimes P_{i}\otimes \mathrm{id}\otimes\cdots\otimes\mathrm{id}$. Note that $h_{i}$ is an $R$-module homomorphism from $S_{\ast}(X_{1})\otimes \cdots \otimes S_{\ast}(X_{i})\otimes S_{\ast}(Y_{i+1})\otimes \cdots \otimes S_{\ast}(Y_{n})$ to $S_{\ast}(X_{1})\otimes \cdots \otimes S_{\ast}(X_{i-1})\otimes S_{\ast+1}(Y_{i})\otimes \cdots \otimes S_{\ast}(Y_{n})$. We have $\tilde{g}_{i}-\tilde{f}_{i}=dh_{i}+h_{i}d$.
    A straightforward calculation shows that
    \begin{equation}\label{equation:homotopy}
        \tilde{g}_{1}\circ\cdots \circ\tilde{g}_{n}-\tilde{f}_{1}\circ\cdots \circ\tilde{f}_{n}=dh+hd,
    \end{equation}
    where $h=\sum\limits_{i=1}^{n}\tilde{g}_{1}\circ \cdots\circ \tilde{g}_{i-1}\circ h_{i}\circ \tilde{f}_{i+1}\circ \cdots\circ \tilde{f}_{n}$. We will prove $h(T_{\ast}(\{X_{i}\}_{1\leq i\leq n}))\subseteq T_{\ast}(\{Y_{i}\}_{1\leq i\leq n})$. Note that
    \begin{equation*}
        \tilde{g}_{1}\circ \cdots\circ \tilde{g}_{i-1}\circ h_{i}\circ \tilde{f}_{i+1}\circ \cdots\circ \tilde{f}_{n}=g_{1}\otimes \cdots g_{i-1}\otimes P_{i}\otimes f_{i+1}\otimes \cdots\otimes f_{n}.
    \end{equation*}
     For any element $\sigma_{1}\otimes \sigma_{2}\otimes\cdots\otimes \sigma_{n}$ satisfying $\bigcap\limits_{r=1}^{n} \im \sigma_{r}=\emptyset$, we assume that $\im g_{1}(\sigma_{1})\cap \cdots \cap \im g_{i-1}(\sigma_{i-1})\cap \im F_{i}(\sigma_{i}\times \mathrm{id}) \cap\im f_{i+1}(\sigma_{i+1})\cap \cdots\cap \im f_{n}(\sigma_{n})\neq \emptyset$. Then one has
     \begin{equation*}
         \left(\bigcap_{r=1}^{n}\im F_{r}(\sigma_{r}\times \mathrm{id})\right)\neq\emptyset.
     \end{equation*}
     Since $\bigcap\limits_{r=1}^{n} \im (\sigma_{r}\times \mathrm{id})=\emptyset$, we obtain $\bigcap\limits_{r=1}^{n}\im F_{r}(\sigma_{r}\times \mathrm{id})=\emptyset$. This contradicts to our assumption. So we have
     \begin{equation*}
         \left(g_{1}\otimes \cdots g_{i-1}\otimes P_{i}\otimes f_{i+1}\otimes \cdots\otimes f_{n}\right)(T_{\ast}(\{X_{i}\}_{1\leq i\leq n})\subseteq T_{\ast}(\{X_{i}\}_{1\leq i\leq n}.
     \end{equation*}
     It follows that $h(T_{\ast}(\{X_{i}\}_{1\leq i\leq n}))\subseteq T_{\ast}(\{Y_{i}\}_{1\leq i\leq n})$.

     By Eq. \ref{equation:homotopy}, we have
     \begin{equation*}
         p\circ\left(\bigotimes\limits_{i=1}^{n}g_{i}^{\sharp}\right)- p\circ\left(\bigotimes\limits_{i=1}^{n}f_{i}^{\sharp}\right)=pdh+phd.
     \end{equation*}
     By the proof of Lemma \ref{lemma:commutative}, one has $p\circ\left(\bigotimes\limits_{i=1}^{n}f_{i}^{\sharp}\right)=\bar{f}\circ p$. Here, $\bar{f}:IS_{\ast}(\{X_{i}\}_{1\leq i\leq n})\to IS_{\ast}(\{Y_{i}\}_{1\leq i\leq n})$ is given by $\bar{f}([z])=[f(z)], z\in \bigotimes\limits_{i=1}^{n}S_{\ast}(X_{i})$. It follows that
     \begin{equation*}
         \bar{g}\circ p-\bar{f}\circ p=pdh+phd.
     \end{equation*}
    Consider the $R$-module homomorphism $r:\bigotimes\limits_{i=1}^{n}S_{\ast}(X_{i})\to T_{\ast}(\{X_{i}\}_{1\leq i\leq n})$ given by
    \begin{equation*}
    r(\sigma_{1}\otimes \sigma_{2}\otimes\cdots\otimes \sigma_{n})=\left\{
        \begin{array}{ll}
        \sigma_{1}\otimes \sigma_{2}\otimes\cdots\otimes \sigma_{n}, & \hbox{$\bigcap\limits_{i=1}^{n} \im \sigma_{i}=\emptyset$;} \\
        0, & \hbox{$\bigcap\limits_{i=1}^{n} \im \sigma_{i}\neq\emptyset$.}
        \end{array}
    \right.
    \end{equation*}
    We have $r|_{T_{\ast}(\{X_{i}\}_{1\leq i\leq n})}=\mathrm{id}|_{T_{\ast}(\{X_{i}\}_{1\leq i\leq n})}$. Note that $d(T_{\ast}(\{X_{i}\}_{1\leq i\leq n}))\subseteq T_{\ast}(\{X_{i}\}_{1\leq i\leq n})$. One has $\im (hdr)\subseteq T_{\ast}(\{X_{i}\}_{1\leq i\leq n})$ and $\im (hrd)\subseteq T_{\ast}(\{X_{i}\}_{1\leq i\leq n})$. It follows that
    \begin{equation*}
        phdr=phrd=0.
    \end{equation*}
    Define an $R$-module homomorphism $s:IS_{\ast}(\{X_{i}\}_{1\leq i\leq n})\to T_{\ast}(\{X_{i}\}_{1\leq i\leq n})$ by $s([z])=z-r(z)$ for $z\in IS_{\ast}(\{X_{i}\}_{1\leq i\leq n})$. This definition is well defined since $z_{1}-r(z_{1})=0$ for $z_{1}\in T_{\ast}(\{X_{i}\}_{1\leq i\leq n})$. Moreover, for any $[z]\in IS_{\ast}(\{X_{i}\}_{1\leq i\leq n})$, we have
    \begin{equation*}
        phds([z])=phdz-phdr(z)=phdz=phdz-phr(dz)=phs([dz]).
    \end{equation*}
    Here, $[dz]=pdz=\bar{d}pz=\bar{d}[z]$ and $\bar{d}$ is the differential on the chain complex $IS_{\ast}(\{X_{i}\}_{1\leq i\leq n})$. It follows that $phds=phs\bar{d}$. On the other hand, a straightforward verification shows $p\circ s=\mathrm{id}$ on $IS_{\ast}(\{X_{i}\}_{1\leq i\leq n})$.
    Thus we obtain
    \begin{equation*}
        \bar{g}-\bar{f}=pdhs+phds=\bar{d}(phs)+(phs)\bar{d}.
    \end{equation*}
    This shows that $\bar{f}\simeq \bar{g}$ is a chain homotopy. The desired result follows.
\end{proof}

\begin{corollary}
Let $f: \{X_{i}\}_{1\leq i\leq n}\to \{Y_{i}\}_{1\leq i\leq n}$ be a homotopy equivalence between interaction spaces. Then $H_{p}(f)$ is an isomorphism for any $p\geq 0$.
\end{corollary}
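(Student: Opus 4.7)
The plan is to derive this corollary as a formal consequence of the two main ingredients already in place: the functoriality of $IS_{\ast}$ from Lemma \ref{lemma:commutative} and the homotopy invariance from Theorem \ref{theorem:homotopy_invariant}. This is the standard argument that an isomorphism in a homotopy category descends, through any homotopy-invariant functor, to an isomorphism in the target category.

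First, unpacking the definition of homotopy equivalence, I would choose a morphism of interaction spaces
\begin{equation*}
g:\{Y_{i}\}_{1\leq i\leq n}\to \{X_{i}\}_{1\leq i\leq n}
\end{equation*}
such that $f\circ g\sim \mathrm{id}_{\{Y_{i}\}_{1\leq i\leq n}}$ and $g\circ f\sim \mathrm{id}_{\{X_{i}\}_{1\leq i\leq n}}$ in the interaction-homotopy sense. Since interaction homotopy is defined as a finite chain of formal homotopies, and formal homotopy passes to equality on homology by Theorem \ref{theorem:homotopy_invariant}, iterating that theorem along the chain gives $H_{p}(f\circ g)=H_{p}(\mathrm{id})$ and $H_{p}(g\circ f)=H_{p}(\mathrm{id})$ for every $p\geq 0$.

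Next, applying Lemma \ref{lemma:commutative}, which says $IS_{\ast}$ (and hence $H_{\ast}$) is a functor from $\mathbf{IT_{n}}$ to the category of chain complexes (resp.\ graded modules), I would rewrite these identities as
\begin{equation*}
H_{p}(f)\circ H_{p}(g)=\mathrm{id}_{H_{p}(\{Y_{i}\}_{1\leq i\leq n})},\qquad H_{p}(g)\circ H_{p}(f)=\mathrm{id}_{H_{p}(\{X_{i}\}_{1\leq i\leq n})}.
\end{equation*}
This exhibits $H_{p}(g)$ as a two-sided inverse of $H_{p}(f)$, so $H_{p}(f)$ is an isomorphism for every $p\geq 0$.

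There is no real obstacle in this argument: everything reduces to the functoriality of $IS_{\ast}$ and the invariance proven in the preceding theorem. The only mildly delicate point is to make sure that the homotopy relation $\sim$ used in the definition of homotopy equivalence is exactly the one for which Theorem \ref{theorem:homotopy_invariant} applies — but the theorem is stated precisely for $\sim$, so this is automatic.
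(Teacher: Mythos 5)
Your argument is correct and is exactly the standard deduction the paper intends (it states the corollary without proof immediately after Theorem \ref{theorem:homotopy_invariant}): pick a homotopy inverse $g$, apply homotopy invariance to $f\circ g\sim\mathrm{id}$ and $g\circ f\sim\mathrm{id}$, and use functoriality of $IS_{\ast}$ to conclude $H_{p}(g)$ is a two-sided inverse of $H_{p}(f)$. Your remark that the chain-of-formal-homotopies definition of $\sim$ is already covered by the theorem is also the right point to check, and it holds.
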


\subsection{Relative interaction homology}
Let $\{X_{i}\}_{1\leq i\leq n}$ be an interaction space. Consider the sub interaction space $\{A_{i}\}_{1\leq i\leq n}$ of $\{X_{i}\}_{1\leq i\leq n}$, i.e., $A_{i}\subseteq X_{i}$ for $1\leq i\leq n$. Note that $S_{\ast}(\{A_{i}\}_{1\leq i\leq n})$ is a subcomplex of $S_{\ast}(\{X_{i}\}_{1\leq i\leq n})$. The \emph{relative chain complex} $S_{\ast}(\{X_{i}\}_{1\leq i\leq n},\{A_{i}\}_{1\leq i\leq n})$ is defined by
\begin{equation*}
  S_{p}(\{X_{i}\}_{1\leq i\leq n},\{A_{i}\}_{1\leq i\leq n})=S_{p}(\{X_{i}\}_{1\leq i\leq n})/S_{p}(\{A_{i}\}_{1\leq i\leq n})
\end{equation*}
with the differential induced by the quotient.
\begin{definition}
The \emph{relative interaction homology} $H_{\ast}(\{X_{i}\}_{1\leq i\leq n},\{A_{i}\}_{1\leq i\leq n})$ is the homology of the chain complex $S_{\ast}(\{X_{i}\}_{1\leq i\leq n},\{A_{i}\}_{1\leq i\leq n})$.
\end{definition}
We also have a long exact sequence of interaction homology induced by a short exact sequence of interaction chain complexes $0\to S_{p}(\{A_{i}\}_{1\leq i\leq n})\to S_{p}(\{X_{i}\}_{1\leq i\leq n})\to S_{p}(\{X_{i}\}_{1\leq i\leq n},\{A_{i}\}_{1\leq i\leq n})$.
\begin{proposition}
For any pair $(\{X_{i}\}_{1\leq i\leq n},\{A_{i}\}_{1\leq i\leq n})$, there is a long exact sequence
\begin{equation*}
\begin{split}
  \cdots\to H_{p}(\{A_{i}\}_{1\leq i\leq n})\to H_{p}(\{X_{i}\}_{1\leq i\leq n})\to  H_{n}(\{X_{i}\}_{1\leq i\leq n},\{A_{i}\}_{1\leq i\leq n})&\stackrel{\delta}{\to}H_{p-1}(\{A_{i}\}_{1\leq i\leq n})\to \cdots \\
    \cdots& \to H_{0}(\{X_{i}\}_{1\leq i\leq n},\{A_{i}\}_{1\leq i\leq n})\to 0.
\end{split}
\end{equation*}
\end{proposition}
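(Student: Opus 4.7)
The plan is to deduce this long exact sequence from the general zig-zag lemma applied to a short exact sequence of interaction chain complexes. The assertion in the proposition already tells us which short exact sequence to use, so the real work is verifying that this sequence is genuinely short exact at the interaction level (not merely at the level of the pre-quotient tensor product), and then invoking the standard machinery.

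First I would make the setup precise. Write $U_{\ast}(\{X_{i}\}) = \bigotimes_{i} S_{\ast}(X_{i})$, so that $IS_{\ast}(\{X_{i}\}) = U_{\ast}(\{X_{i}\}) / T_{\ast}(\{X_{i}\})$. The componentwise inclusion $A_{i} \hookrightarrow X_{i}$ induces an injection $U_{\ast}(\{A_{i}\}) \hookrightarrow U_{\ast}(\{X_{i}\})$ whose image has a basis consisting of pure tensors $\sigma_{1}\otimes \cdots \otimes \sigma_{n}$ with each $\sigma_{i}$ factoring through $A_{i}$. The crucial point I would then verify is the identity
\begin{equation*}
    T_{\ast}(\{A_{i}\}) \;=\; U_{\ast}(\{A_{i}\}) \cap T_{\ast}(\{X_{i}\}),
\end{equation*}
using the fact that for singular simplices $\sigma_{i}\colon \Delta^{p_{i}}\to A_{i}$, the intersection $\bigcap_{i} \im \sigma_{i}$ is the same whether computed in $A_{i}$ or in $X_{i}$, together with the observation that membership in $T_{\ast}$ of a linear combination of basis tensors is detected term by term. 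This identity is exactly what guarantees that the induced map $IS_{\ast}(\{A_{i}\}) \to IS_{\ast}(\{X_{i}\})$ is injective, so the sequence
\begin{equation*}
    0 \longrightarrow IS_{\ast}(\{A_{i}\}) \longrightarrow IS_{\ast}(\{X_{i}\}) \longrightarrow S_{\ast}(\{X_{i}\},\{A_{i}\}) \longrightarrow 0
\end{equation*}
is a genuine short exact sequence of chain complexes (exactness in the middle and on the right being immediate from the definition of the quotient).

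Having established this, I would simply invoke the zig-zag lemma (or equivalently the snake lemma applied degree by degree) to produce the long exact sequence in homology, with the connecting homomorphism $\delta$ obtained by the usual diagram chase: pick a relative cycle, lift it to $IS_{\ast}(\{X_{i}\})$, apply the differential, and observe that the result lies in the image of $IS_{\ast}(\{A_{i}\})$ and represents a well-defined homology class there.

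I do not expect any genuine obstacle beyond the bookkeeping above; the main technical point worth flagging is the verification of $T_{\ast}(\{A_{i}\}) = U_{\ast}(\{A_{i}\}) \cap T_{\ast}(\{X_{i}\})$, since without it the sequence of chain complexes would only be exact after passing to quotients in a way that could introduce a nontrivial kernel. Once that equality is in hand, the rest is entirely formal homological algebra and the long exact sequence drops out, ending at $H_{0}(\{X_{i}\},\{A_{i}\})\to 0$ because the interaction chain complexes are concentrated in nonnegative degrees.
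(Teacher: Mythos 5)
Your proposal is correct and follows essentially the same route as the paper, which simply asserts that the long exact sequence is induced by the short exact sequence $0\to S_{p}(\{A_{i}\})\to S_{p}(\{X_{i}\})\to S_{p}(\{X_{i}\},\{A_{i}\})\to 0$ and gives no further argument. In fact you supply the one nontrivial detail the paper omits, namely the identity $T_{\ast}(\{A_{i}\})=U_{\ast}(\{A_{i}\})\cap T_{\ast}(\{X_{i}\})$ (which holds because both submodules are spanned by subsets of the common basis of pure tensors of singular simplices), guaranteeing left exactness at the interaction level.
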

Besides, the relative homology is also a homotopy invariant.
\begin{proposition}
Let $f\sim g: (\{X_{i}\}_{1\leq i\leq n},\{A_{i}\}_{1\leq i\leq n})\to (\{Y_{i}\}_{1\leq i\leq n},\{B_{i}\}_{1\leq i\leq n})$ be a homotopy between morphisms of pairs. Then we have $H_{\ast}(f)=H_{\ast}(g)$.
\end{proposition}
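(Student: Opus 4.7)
The plan is to reduce this proposition to Theorem \ref{theorem:homotopy_invariant} by verifying that the chain homotopy constructed there is natural with respect to inclusions of sub-interaction-spaces, and hence descends to the quotient defining the relative chain complex. I would first reduce to the case of a single formal homotopy (the general case follows by concatenation), so assume there is a formal homotopy $H=\{H_i\}_{1\leq i\leq n}:\{X_i\times I\}_{1\leq i\leq n}\to\{Y_i\}_{1\leq i\leq n}$ of pairs from $f$ to $g$. Being a morphism of pairs means each $H_i$ restricts to a homotopy $A_i\times I\to B_i$, and therefore each prism operator $P_i:S_\ast(X_i)\to S_{\ast+1}(Y_i)$ used in the proof of Theorem \ref{theorem:homotopy_invariant} carries the subcomplex $S_\ast(A_i)$ into $S_{\ast+1}(B_i)$.

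Next I would revisit the chain homotopy $\bar{h}=phs$ produced in the proof of Theorem \ref{theorem:homotopy_invariant}, which satisfies $\bar{g}-\bar{f}=\bar{d}\bar{h}+\bar{h}\bar{d}$ on $IS_\ast(\{X_i\}_{1\leq i\leq n})$. The ingredients of $\bar{h}$ are tensor products of prism operators and of the $f_i^\sharp,g_i^\sharp$, together with the projection $p$ and the splitting $s$. Each of these is natural with respect to the inclusions $A_i\hookrightarrow X_i$ and $B_i\hookrightarrow Y_i$. In particular, the intersection-empty condition defining $T_\ast$, and hence the map $r$ used to define $s([z])=z-r(z)$, is inherited on passing to the sub-interaction-space $\{A_i\}$. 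Thus the same construction applied to $\{A_i\}$ and $\{B_i\}$ yields a chain homotopy $\bar{h}_A:IS_\ast(\{A_i\}_{1\leq i\leq n})\to IS_{\ast+1}(\{B_i\}_{1\leq i\leq n})$, and $\bar{h}$ restricts to $\bar{h}_A$ along the natural inclusion.

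Consequently $\bar{h}$ preserves the subcomplex, so it descends to a chain homotopy $\tilde{h}:S_\ast(\{X_i\}_{1\leq i\leq n},\{A_i\}_{1\leq i\leq n})\to S_{\ast+1}(\{Y_i\}_{1\leq i\leq n},\{B_i\}_{1\leq i\leq n})$ on the quotient chain complexes. Passing $\bar{g}-\bar{f}=\bar{d}\bar{h}+\bar{h}\bar{d}$ to the quotient gives the chain homotopy $\tilde{g}-\tilde{f}=\tilde{d}\tilde{h}+\tilde{h}\tilde{d}$, and taking homology yields $H_\ast(f)=H_\ast(g)$ on relative interaction homology.

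The main point of care — and it is essentially bookkeeping rather than a genuine obstacle — is to verify that $\bar{h}$ really does carry $IS_\ast(\{A_i\}_{1\leq i\leq n})$ into $IS_\ast(\{B_i\}_{1\leq i\leq n})$, which amounts to tracking both the tensor-factor structure and the quotient by $T_\ast$. Given the naturality of every ingredient, this is immediate, so no new ideas beyond those in the proof of Theorem \ref{theorem:homotopy_invariant} are needed; the proposition is effectively a relative repackaging of that theorem.
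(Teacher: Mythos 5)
The paper states this proposition without giving any proof, so there is nothing to compare against; your argument supplies the missing details and is correct. Relativizing the chain homotopy $\bar h=phs$ from Theorem \ref{theorem:homotopy_invariant} is the natural route: the prism operators $P_i$ carry $S_{\ast}(A_i)$ into $S_{\ast+1}(B_i)$, the defect map $r$ (hence the splitting $s$) is intrinsic to the empty-intersection condition and so restricts compatibly to $\{A_i\}_{1\leq i\leq n}$, and $T_{\ast}(\{A_i\}_{1\leq i\leq n})=T_{\ast}(\{X_i\}_{1\leq i\leq n})\cap\bigotimes_i S_{\ast}(A_i)$ guarantees that $IS_{\ast}(\{A_i\}_{1\leq i\leq n})$ really is a subcomplex, so $\bar h$ descends to the quotient. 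The one point worth making explicit is that in the multi-step case $f\simeq f_1\simeq\cdots\simeq f_k\simeq g$ each intermediate morphism must itself be a morphism of pairs so that each one-step chain homotopy preserves the subcomplexes; this is implicit in the definition of homotopy of morphisms of pairs and you assume it correctly.
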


\section{Interaction simplicial homology}\label{section:simplicial_homology}

In this section, we introduce the interaction simplicial homology and interaction simplicial cohomology. The interaction simplicial homology provide us with a discrete perspective for analyzing the topological structures of interaction spaces. Moreover, they enable us to harness the power of interaction homology in topological data analysis.

\subsection{Wu characteristic}
Consider the interaction simplicial complex $(K,\{K_{i}\}_{1\leq i\leq n})$, that is, a simplicial complex $K$ equipped with a family of subcomplex $K_{i}$ such that $K=\bigcup\limits_{i=1}^{n}K_{i}$. There is a family of chain complexes $C_{\ast}(K_{i}), i=1,2,\dots,n$. The tensor product $\bigotimes\limits_{i=1}^{n}C_{\ast}(K_{i})$ is also a chain complex with the differential given by
\begin{equation*}
  d(\sigma_{1}\otimes \sigma_{2}\otimes\cdots\otimes \sigma_{n})=\sum\limits_{i=1}^{n}(-1)^{\dim \sigma_{1}+\cdots+\dim \sigma_{i-1}}\sigma_{1}\otimes\cdots \otimes d\sigma_{i}\otimes\cdots\otimes \sigma_{n}.
\end{equation*}
Consider the sub $R$-module $D_{\ast}(\{K_{i}\}_{1\leq i\leq n})$ of $\bigotimes\limits_{i=1}^{n}C_{\ast}(K_{i})$ generated by the elements $\sigma_{1}\otimes \sigma_{2}\otimes\cdots\otimes \sigma_{n}$ for all $\sigma_{1}\in K_{1},\sigma_{2}\in K_{2},\dots, \sigma_{n}\in K_{n}$ satisfying $\bigcap\limits_{i=1}^{n} \sigma_{i}=\emptyset$. Note that $\bigcap\limits_{i=1}^{n} \sigma_{i}=\emptyset$ implies $\left(\bigcap\limits_{i\neq j} \sigma_{i}\right)\cap d_{r}\sigma_{j}=\emptyset$. Here, $d_{r}:(K_{j})_{\dim \sigma_{j}}\to (K_{j})_{\dim \sigma_{j}-1}$ is the face map for $0\leq r\leq \dim \sigma_{j}$. Therefore, $D_{\ast}(\{K_{i}\}_{1\leq i\leq n})$ is a sub chain complex of $\bigotimes\limits_{i=1}^{n}C_{\ast}(K_{i})$.
The \emph{interaction chain complex} is given by
\begin{equation*}
  IC_{\ast}(\{K_{i}\}_{1\leq i\leq n})=\left(\bigotimes\limits_{i=1}^{n}C_{\ast}(K_{i})\right)/D_{\ast}(\{K_{i}\}_{1\leq i\leq n}).
\end{equation*}
The \emph{interaction simplicial homology} of $\{K_{i}\}_{1\leq i\leq n}$ is defined by
\begin{equation*}
  H_{p}(\{K_{i}\}_{1\leq i\leq n})=H_{p}(IC_{\ast}(\{K_{i}\}_{1\leq i\leq n})),\quad p\geq 0.
\end{equation*}
Consider the case $n=2$ and $K_{1}=K_{2}=K$. The \emph{Wu characteristic} of $K$ is defined by $\omega(K)=\sum\limits_{\sigma\sim \tau}(-1)^{\dim\sigma+\dim\tau}$ \cite{wen1959topologie}. Here, $\sigma\sim\tau$ means $\sigma\cap \tau\neq\emptyset$ for $\sigma,\tau\in K$. In other words, the Wu characteristic count the number of intersections of simplices.
The interaction Betti numbers exhibit a close relationship with the Wu characteristic. This connection bears resemblance to the relationship between the Betti numbers and the Euler characteristic in the usual homology theory. To be more precise, we can express it as $\omega(K)=\sum\limits_{p\geq 0}(-1)^{p}\beta_{2,p}$. Consequently, interaction homology can be seen as an analogue of homology that encodes the Wu characteristic.

\subsection{Interaction simplicial map}
Let $\{K_{i}\}_{1\leq i\leq n}$ and $\{L_{i}\}_{1\leq i\leq n}$ be interaction simplicial complexes.
An \emph{interaction simplicial map} $f=\{f_{i}\}_{1\leq i\leq n}:\{K_{i}\}_{1\leq i\leq n}\to \{L_{i}\}_{1\leq i\leq n}$ is a family of simplicial maps $f_{i}:K_{i}\to L_{i},1\leq i\leq n$ such that for any $1\leq i,j\leq n$, $f_{i}(\sigma_{i})=f_{j}(\sigma_{j})$ if and only if $\sigma_{i}=\sigma_{j}$, where $\sigma_{i}\in K_{i},\sigma_{j}\in K_{j}$.
\begin{lemma}
An interaction simplicial map $f=\{f_{i}\}_{1\leq i\leq n}:\{K_{i}\}_{1\leq i\leq n}\to \{L_{i}\}_{1\leq i\leq n}$ is an interaction morphism of simplicial complexes, and vice versa.
\end{lemma}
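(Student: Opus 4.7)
The plan is to pass between the simplex-level condition defining interaction simplicial maps and the point-level condition (\ref{interaction_condition}) on geometric realizations, establishing both directions of the equivalence.

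\emph{Forward direction.} Suppose $f=\{f_{i}\}$ is an interaction simplicial map. First, applying the simplicial condition with $i=j$ to vertex singletons $\{v\},\{w\}\in K_{i}$ shows $f_{i}$ is injective on vertices of $K_{i}$, so $|f_{i}|:|K_{i}|\to|L_{i}|$ is an embedding of geometric realizations. Applied to any shared vertex $v\in K_{i}\cap K_{j}$, the condition also yields $f_{i}(v)=f_{j}(v)$, so $|f_{i}|$ and $|f_{j}|$ agree on $|K_{i}\cap K_{j}|$ after affine extension. To verify (\ref{interaction_condition}), I would suppose $|f_{i}|(x_{i})=|f_{j}|(x_{j})$ and let $\sigma_{i}\in K_{i}$, $\sigma_{j}\in K_{j}$ be the unique carrier simplices containing $x_{i},x_{j}$ in their relative interiors. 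Since $|f_{i}|$ sends the relative interior of $\sigma_{i}$ homeomorphically onto that of $f_{i}(\sigma_{i})$, the uniqueness of carriers in the ambient complex forces $f_{i}(\sigma_{i})=f_{j}(\sigma_{j})$. The simplicial interaction condition then gives $\sigma_{i}=\sigma_{j}=:\sigma$, and injectivity of $|f_{i}||_{\sigma}$ combined with the agreement on intersections yields $x_{i}=x_{j}$.

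\emph{Reverse direction.} Suppose $\{|f_{i}|\}$ is a morphism in $\mathbf{IT_{n}}$ with each $f_{i}$ simplicial. If $f_{i}(\sigma_{i})=f_{j}(\sigma_{j})=:\tau$, I pick any point $y$ in the relative interior of $\tau$. Because (\ref{interaction_condition}) applied with $i=j$ forces $|f_{i}|$ to be globally injective, each restriction $|f_{i}||_{\sigma_{i}}$ is a homeomorphism onto $|\tau|$; this produces unique interior points $x_{i}\in\sigma_{i}$ and $x_{j}\in\sigma_{j}$ with $|f_{i}|(x_{i})=y=|f_{j}|(x_{j})$. Applying (\ref{interaction_condition}) then forces $x_{i}=x_{j}$, and uniqueness of carriers delivers $\sigma_{i}=\sigma_{j}$.

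\emph{Main obstacle.} The principal subtlety is extracting from the bare simplex-level equality $f_{i}(\sigma)=f_{j}(\sigma)$ the stronger vertex-by-vertex equality $f_{i}(v)=f_{j}(v)$ on $|K_{i}\cap K_{j}|$, as well as the injectivity of each $f_{i}$ on $K_{i}$; these are precisely what is needed to translate (\ref{interaction_condition}) across geometric realization. The resolution is to specialize the simplicial condition to $0$-simplices, both for pairs in the intersection and for pairs inside a single $K_{i}$. Once this reduction is in place, the correspondence between carrier simplices and points in their relative interior transfers the interaction condition between the combinatorial and topological levels cleanly.
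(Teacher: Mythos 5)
Your argument is correct and follows essentially the same route as the paper: reduce the point-level condition $(\ast)$ to vertex-level agreement on $K_{i}\cap K_{j}$, pass to the carrier simplices of $x_{i}$ and $x_{j}$, deduce $f_{i}(\sigma_{i})=f_{j}(\sigma_{j})$ and hence $\sigma_{i}=\sigma_{j}$, and finish by injectivity of $f_{i}$ on a single simplex. The only differences are cosmetic — you invoke uniqueness of carriers in $L$ where the paper runs a minimality argument with $\sigma_{i}\cap f_{i}^{-1}(\tau)$, and you spell out the converse direction that the paper dismisses as obvious.
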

\begin{proof}
For any point $x$ in $K_{i}\cap K_{j}$, we can find a simplex $\sigma=\langle v_{0},v_{1},\dots,v_{p}\rangle$ containing $x$ in $K_{i}\cap K_{j}$. It follows that $f_{i}(v_{t})=f_{j}(v_{t})$ for $t=0,1,\dots,p$. Suppose that $x=\sum\limits_{t=0}^{p}a_{t}v_{t}$ for $a_{0},a_{1},\dots,a_{p}\in [0,1]$. It follows that $f_{i}(x)=f_{j}(x)$. On the other hand, suppose $f_{i}(x_{i})=f_{j}(x_{j})$ for some $x_{i}\in K_{i},x_{j}\in K_{j}$. Let $\sigma_{i}$ and $\sigma_{j}$ be the minimal simplicies containing $x_{i}$ and $x_{j}$ in $K_{i}$ and $K_{j}$, respectively. Then $\tau=f_{i}(\sigma_{i})\cap f_{j}(\sigma_{j})\neq \emptyset$ is a simplex in $L_{i}\cap L_{j}$. Let $\tau_{i}=\sigma_{i}\cap f_{i}^{-1}(\tau)\subseteq \sigma_{i}$. Note that $x_{i}\in\tau_{i}$. By the minimality of $\sigma_{i}$, one has $\sigma_{i}=\tau_{i}$. It follows that $\sigma_{i}\subseteq f_{i}^{-1}(\tau)$. Hence, we obtain $f_{i}(\sigma_{i})\subseteq \tau$. Thus one has $f_{i}(\sigma_{i})\subseteq f_{j}(\sigma_{j})$. Similarly, we have $f_{j}(\sigma_{j})\subseteq f_{i}(\sigma_{i})$. It follows that $f_{i}(\sigma_{i})= f_{j}(\sigma_{j})$. So we have $\sigma_{i}=\sigma_{j}$. Let $\sigma_{i}=\langle v_{0},v_{1},\dots,v_{p}\rangle$. Since $f_{i}$ and $f_{j}$ are simplicial maps, one has $f_{i}(v_{r})= f_{j}(v_{r})$ for $0\leq r\leq p$.
Suppose $f_{i}(v_{r})=f_{i}(v_{s})$ for some $0\leq r,s\leq p$. Then one has $f_{i}(v_{r})=f_{j}(v_{s})$, and then we obtain $v_{r}=v_{s}$. Thus $f_{i}(v_{0}),\dots,f_{i}(v_{n})$ are linearly independent. Considering $x_{i},x_{j}\in \sigma_{i}$, we can write $x_{i}=\sum\limits_{t=0}^{p}a_{t}v_{t}$ and $x_{j}=\sum\limits_{t=0}^{p}b_{t}v_{t}$, where $a_{i}$ and $b_{i}$ are real numbers. It follows that
\begin{equation*}
  f_{i}(x_{i})-f_{j}(x_{j})=\sum\limits_{t=0}^{p}(a_{t}-b_{t})f_{i}(v_{t})=0.
\end{equation*}
Thus we have $a_{t}=b_{t}$ for any $0\leq t\leq p$. It follows that $x_{i}=x_{j}$. Therefore, the condition (\ref{interaction_condition}) is satisfied, and $f=\{f_{i}\}_{1\leq i\leq n}:\{K_{i}\}_{1\leq i\leq n}\to \{L_{i}\}_{1\leq i\leq n}$ is an interaction morphism of simplicial complexes. The converse is obvious.
\end{proof}
The above lemma highlights that the interaction simplicial map is essentially the simplicial counterpart of the interaction morphism. Moreover, one can define interaction simplicial maps for abstract simplicial complexes in a similar manner. Based on this idea, the morphism of interaction spaces can be approximated by interaction simplicial maps. Moreover, the simplicial homology is expected to be isomorphic to the singular homology.

\subsection{Interaction cohomology}
Let $\{K_{i}\}_{1\leq i\leq n}$ be a simplicial complex. An \emph{interaction cochain} on $\{K_{i}\}_{1\leq i\leq n}$ is an $R$-module homomorphism $f:C_{\ast}(K_{1})\otimes C_{\ast}(K_{2})\otimes\cdots\otimes C_{\ast}(K_{n})\to R$ such that $f(\sigma_{1}\otimes\cdots \otimes\sigma_{n})=0$ for $\bigcap\limits_{i=1}^{n} \sigma_{i}=\emptyset$.
Here, $\sigma_{1}\in K_{1},\sigma_{2}\in K_{2},\dots, \sigma_{n}\in K_{n}$. By definition, for any linear map $f$, we have
\begin{equation*}
\begin{split}
  d(f)(\sigma_{1}\otimes\cdots \otimes\sigma_{n})=&-(-1)^{\deg f}fd(\sigma_{1}\otimes\cdots \otimes\sigma_{n}) \\
    =&-(-1)^{\deg f}f\left(\sum\limits_{i=1}^{n}(-1)^{\dim \sigma_{1}+\cdots+\dim\sigma_{i-1}}\sigma_{1}\otimes\cdots \otimes d\sigma_{i}\otimes\cdots\otimes\sigma_{n}\right).
\end{split}
\end{equation*}
In view of $(\bigcap\limits_{j\neq i}^{n} \sigma_{j})\cap d_{r}\sigma_{i}=\emptyset$ for $0\leq r\leq \dim\sigma_{i}$, the linear map $d(f)$ is an interaction cochain on $\{K_{i}\}_{1\leq i\leq n}$.
Let $p=\sum\limits_{i=1}^{n}p_{i}$. An interaction cochain of form $f:C_{p_{1}}(K_{1})\otimes C_{p_{2}}(K_{2})\otimes\cdots\otimes C_{p_{n}}(K_{n})\to R$ is an interaction $p$-cochain. Let $IC^{p}(\{K_{i}\}_{1\leq i\leq n})$ be the $R$-module generated by the interaction $p$-cochains on $\{K_{i}\}_{1\leq i\leq n}$. Then one has a cochain complex
\begin{equation*}
  0 \to IC^{0}(\{K_{i}\}_{1\leq i\leq n})\stackrel{d}{\to} IC^{1}(\{K_{i}\}_{1\leq i\leq n})\to \cdots \to IC^{p}(\{K_{i}\}_{1\leq i\leq n})\stackrel{d}{\to} IC^{p+1}(\{K_{i}\}_{1\leq i\leq n})\to \cdots.
\end{equation*}
The cochain complex $IC^{\ast}(\{K_{i}\}_{1\leq i\leq n})$ is called the \emph{interaction cochain complex}. The \emph{interaction cohomology} of $\{K_{i}\}_{1\leq i\leq n}$ is defined by
\begin{equation*}
  H^{p}(\{K_{i}\}_{1\leq i\leq n})=H^{p}(IC^{\ast}(\{K_{i}\}_{1\leq i\leq n})),\quad p\geq 0.
\end{equation*}

\begin{lemma}\label{lemma:cohomology}
Let $R=\mathbb{K}$ be a field. Then we have an isomorphism of cochain complexes
\begin{equation*}
  \omega:IC^{\ast}(\{K_{i}\}_{1\leq i\leq n})\stackrel{\cong}{\to} \mathrm{Hom}_{\mathbb{K}}(IC_{\ast}(\{K_{i}\}_{1\leq i\leq n}),\mathbb{K}),
\end{equation*}
where $\omega(f)(\overline{x})=f(x)$ for $\overline{x}\in IC_{\ast}(\{K_{i}\}_{1\leq i\leq n})$.
\end{lemma}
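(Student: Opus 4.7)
The plan is to recognize $\omega$ as the canonical identification afforded by the universal property of the quotient, since by construction $IC_{\ast}(\{K_{i}\}_{1\leq i\leq n}) = \bigl(\bigotimes_{i=1}^{n} C_{\ast}(K_{i})\bigr)/D_{\ast}(\{K_{i}\}_{1\leq i\leq n})$, while an interaction cochain is precisely a $\mathbb{K}$-linear map on $\bigotimes_{i=1}^{n} C_{\ast}(K_{i})$ that kills the generators of $D_{\ast}$, hence by $\mathbb{K}$-linearity the whole submodule $D_{\ast}$. First I would verify well-definedness: if $x, x' \in \bigotimes_{i=1}^{n} C_{\ast}(K_{i})$ represent the same class $\overline{x} \in IC_{\ast}$, then $x - x' \in D_{\ast}$ and so $f(x) - f(x') = f(x-x') = 0$; hence the rule $\omega(f)(\overline{x}) = f(x)$ makes sense and $\omega(f)$ is $\mathbb{K}$-linear on $IC_{\ast}$.

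Next I would produce an explicit inverse $\psi : \mathrm{Hom}_{\mathbb{K}}(IC_{\ast}(\{K_{i}\}_{1\leq i\leq n}), \mathbb{K}) \to IC^{\ast}(\{K_{i}\}_{1\leq i\leq n})$ by setting $\psi(g)(x) = g(\overline{x})$, where $\overline{x}$ denotes the image of $x$ under the quotient projection $p$. Because $p$ sends every generator $\sigma_{1} \otimes \cdots \otimes \sigma_{n}$ with $\bigcap_{i=1}^{n} \sigma_{i} = \emptyset$ to zero, $\psi(g)$ vanishes on such generators and is therefore an interaction cochain of the right degree. The relations $\omega \circ \psi = \mathrm{id}$ and $\psi \circ \omega = \mathrm{id}$ follow immediately from the definitions, so $\omega$ is a $\mathbb{K}$-linear isomorphism in each degree.

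Finally I would check compatibility with the coboundary. Write $\overline{d}$ for the differential induced on $IC_{\ast}$ by passage to the quotient, so that $\overline{d}\overline{x} = \overline{dx}$, and let $\delta$ be the dual coboundary on $\mathrm{Hom}_{\mathbb{K}}(IC_{\ast}, \mathbb{K})$ defined, with the sign convention of the excerpt, by $(\delta g)(\overline{x}) = -(-1)^{\deg g} g(\overline{d}\overline{x})$. For $f \in IC^{p}(\{K_{i}\}_{1\leq i\leq n})$ and any $x \in \bigotimes_{i=1}^{n} C_{\ast}(K_{i})$ we then compute
\begin{equation*}
\omega(df)(\overline{x}) = (df)(x) = -(-1)^{p} f(dx) = -(-1)^{p} \omega(f)(\overline{dx}) = -(-1)^{p} \omega(f)(\overline{d}\,\overline{x}) = (\delta\, \omega(f))(\overline{x}),
\end{equation*}
so $\omega$ intertwines $d$ with $\delta$ and is an isomorphism of cochain complexes.

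I do not foresee any serious obstacle: the lemma is a formal consequence of the universal property of the quotient together with the observation that the differential on the tensor product descends to $IC_{\ast}$ precisely because the face maps preserve the "empty intersection" condition (as noted when $D_{\ast}$ was shown to be a subcomplex). Note that the field hypothesis on $\mathbb{K}$ is not strictly needed for the statement itself, since the identification $\mathrm{Hom}_{R}(A/B, R) \cong \{\varphi \in \mathrm{Hom}_{R}(A, R) : \varphi|_{B} = 0\}$ is valid over any commutative ring $R$; the hypothesis is presumably included for the subsequent duality between interaction homology and cohomology, where a universal coefficient argument requires $\mathbb{K}$ to be a field.
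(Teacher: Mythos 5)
Your proposal is correct and follows essentially the same route as the paper: both identify $\omega$ via the universal property of the quotient (an interaction cochain kills $D_{\ast}$ and hence descends to $IC_{\ast}$, while precomposition with the projection gives the inverse) and verify compatibility with the differential by the same sign computation. Your closing remark that the field hypothesis is not needed for this lemma itself is accurate but does not change the argument.
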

\begin{proof}
First and foremost, it is important to clarify that the notion $\overline{x}$ represents the image of an element $x$ under the projection $\bigotimes\limits_{i=1}^{n}C_{\ast}(K_{i})\to IC_{\ast}(\{K_{i}\}_{1\leq i\leq n})$.
For any $f\in IC^{\ast}(\{K_{i}\}_{1\leq i\leq n})$, we have $D_{\ast}(\{K_{i}\}_{1\leq i\leq n}\subseteq \ker f$. This induces the map
\begin{equation*}
  \omega(f):IC_{\ast}(\{K_{i}\}_{1\leq i\leq n})\to \bigotimes\limits_{i=1}^{n}C_{\ast}(K_{i})\to\mathbb{K}.
\end{equation*}
Note that the differential of $\mathrm{Hom}_{\mathbb{K}}(IC_{\ast}(\{K_{i}\}_{1\leq i\leq n}),\mathbb{K})$ is given by
\begin{equation*}
  d(g)=-(-1)^{\deg g}gd.
\end{equation*}
A straightforward calculation shows $\omega(d(f))(\overline{x})=d(f)(x)=-(-1)^{\deg f}fdx$ and
\begin{equation*}
  (d\omega(f))(\overline{x})=-(-1)^{\deg \omega(f)}\omega(f)d\overline{x}= -(-1)^{\deg f}\omega(f)\overline{dx}=-(-1)^{\deg f}fdx.
\end{equation*}
Thus the chain map $\omega$ is well defined. Moreover, $\omega$ is obviously an injection.

For any $g\in \mathrm{Hom}_{\mathbb{K}}(IC_{\ast}(\{K_{i}\}_{1\leq i\leq n}),\mathbb{K})$, there is a map $\tilde{g}:IC_{\ast}(\{K_{i}\}_{1\leq i\leq n})\to \mathbb{K}$ given by $\tilde{g}(x)=g(\overline{x})$. One has
\begin{equation*}
  \omega(\tilde{g})(\overline{x})=\tilde{g}(x)=g(\overline{x})
\end{equation*}
Since $\omega$ is injective, we obtain $\omega(\tilde{g})=g$. Thus $\omega$ is a surjection. The desired isomorphism follows.
\end{proof}

Similarly to the case of topological spaces, we can establish the universal coefficient theorem for interaction spaces over a field.
\begin{theorem}\label{theorem:cohomology}
Let $R=\mathbb{K}$ be a field. Then the map
\begin{equation*}
  \psi:H^{\ast}(\{K_{i}\}_{1\leq i\leq n})\stackrel{\cong}{\to} \mathrm{Hom}_{\mathbb{K}}(H_{\ast}(\{K_{i}\}_{1\leq i\leq n}),\mathbb{K})
\end{equation*}
given by $\psi([f])[z]=f(z)$ is an isomorphism.
\end{theorem}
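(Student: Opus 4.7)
The plan is to factor the claimed isomorphism through the linear-dual identification already supplied by Lemma \ref{lemma:cohomology}. First, I would apply that lemma to obtain the isomorphism of cochain complexes
\begin{equation*}
\omega: IC^{\ast}(\{K_{i}\}_{1\leq i\leq n}) \xrightarrow{\cong} \mathrm{Hom}_{\mathbb{K}}(IC_{\ast}(\{K_{i}\}_{1\leq i\leq n}),\mathbb{K}),
\end{equation*}
and pass to cohomology, producing an isomorphism
\begin{equation*}
H^{\ast}(\{K_{i}\}_{1\leq i\leq n}) \cong H^{\ast}\bigl(\mathrm{Hom}_{\mathbb{K}}(IC_{\ast}(\{K_{i}\}_{1\leq i\leq n}),\mathbb{K})\bigr).
\end{equation*}

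Second, I would invoke the standard universal coefficient theorem for cohomology with coefficients in a field. Since $\mathbb{K}$ is a field, every $\mathbb{K}$-module is free, so the relevant $\mathrm{Ext}$ terms vanish and the evaluation map
\begin{equation*}
H^{\ast}\bigl(\mathrm{Hom}_{\mathbb{K}}(C_{\ast},\mathbb{K})\bigr) \xrightarrow{\cong} \mathrm{Hom}_{\mathbb{K}}(H_{\ast}(C_{\ast}),\mathbb{K})
\end{equation*}
is an isomorphism for any chain complex $C_{\ast}$ of $\mathbb{K}$-vector spaces. Applied to $C_{\ast}=IC_{\ast}(\{K_{i}\}_{1\leq i\leq n})$, this produces the second isomorphism needed.

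Finally, I would compose the two isomorphisms and verify that the composite is exactly $\psi$. A cocycle $f\in IC^{\ast}$ is sent by $\omega$ to the linear functional $\overline{x}\mapsto f(x)$; the UCT then sends the class of this functional to the map $[z]\mapsto f(z)$ on $H_{\ast}(\{K_{i}\}_{1\leq i\leq n})$. Two checks are needed for this to be well-defined, and both follow immediately from $f$ being a cocycle in $IC^{\ast}$ together with the chain-level identification $\omega$: cohomologous cocycles yield the same functional on cycles, and cycles differing by a boundary yield the same value. Both are essentially automatic from the fact that $\omega$ is an isomorphism of cochain complexes.

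The main obstacle is thus not a conceptual one but a bookkeeping check that the composite of the two known isomorphisms coincides with the explicit map $\psi([f])[z]=f(z)$. Because $\omega$ is already known to intertwine differentials (Lemma \ref{lemma:cohomology}) and the UCT over a field is a standard consequence of the exactness of $\mathrm{Hom}_{\mathbb{K}}(-,\mathbb{K})$, the verification amounts to unwinding definitions at the level of representatives. No new geometric input concerning interaction spaces is required beyond what is already encoded in $IC_{\ast}$ and $IC^{\ast}$.
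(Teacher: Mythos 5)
Your proposal is correct and follows essentially the same route as the paper: compose the cochain-level isomorphism $\omega$ from Lemma \ref{lemma:cohomology} (passed to cohomology) with the universal coefficient theorem over the field $\mathbb{K}$, and check that the composite is the evaluation map $\psi$. The paper's proof is exactly this two-step factorization $\psi=\phi\circ H(\omega)$, so no further comparison is needed.
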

\begin{proof}
By the universal coefficient theorem, one has an isomorphism
\begin{equation*}
  \phi:H^{\ast}(\mathrm{Hom}_{\mathbb{K}}(IC_{\ast}(\{K_{i}\}_{1\leq i\leq n}),\mathbb{K}))\stackrel{\cong}{\to} \mathrm{Hom}_{\mathbb{K}}(H_{\ast}(\{K_{i}\}_{1\leq i\leq n}),\mathbb{K}).
\end{equation*}
By Lemma \ref{lemma:cohomology}, we have another isomorphism
\begin{equation*}
  H(\omega):H^{\ast}(\{K_{i}\}_{1\leq i\leq n})\stackrel{\cong}{\to} H^{\ast}(\mathrm{Hom}_{\mathbb{K}}(IC_{\ast}(\{K_{i}\}_{1\leq i\leq n}),\mathbb{K})).
\end{equation*}
Thus the map $\psi=\phi\circ H(\omega)$ is the desired isomorphism.
\end{proof}

\section{Acknowledgments}
This work was supported in part by NIH grants R01GM126189, R01AI164266, and R35GM148196, National Science Foundation grants DMS2052983 and IIS-1900473,   Michigan State University Research Foundation, and  Bristol-Myers Squibb  65109.

\bibliographystyle{plain}  % unsrtnat
\bibliography{Reference}

\end{document}